\newtheorem{thm}{Theorem}
\newtheorem{cor}[thm]{Corollary}
\newtheorem{defi}[thm]{Definition}
\newtheorem{rem}[thm]{Remark}
\newtheorem{nota}[thm]{Notation}
\newtheorem{ack}[thm]{Acknowledgement}
\newtheorem*{tempo*}{Template}
\newcommand\be{\begin{equation}}
\newcommand\ee{\end{equation}}
\newbox\gnBoxA
\newdimen\gnCornerHgt
\newdimen\gnArgHgt
\def\Godelnum #1{%
	\setbox\gnBoxA=\hbox{$#1$}%
	\gnArgHgt=\ht\gnBoxA%
	\ifnum \gnArgHgt<\gnCornerHgt
		\gnArgHgt=0pt%
	\else
		\advance \gnArgHgt by -\gnCornerHgt%
	\fi
	\raise\gnArgHgt\hbox{$\ulcorner$} \box\gnBoxA %
		\raise\gnArgHgt\hbox{$\urcorner$}}
\def\bdefi{\begin{defi}\rm}
\def\edefi{\end{defi}}
\def\bnota{\begin{nota}\rm}
\def\enota{\end{nota}}
\def\brem{\begin{rem}\rm}
\def\erem{\end{rem}}
\def\ATR{\textup{\textsf{ATR}}}
\def\RCA{\textup{\textsf{RCA}}}
\def\RCAo{\textup{\textsf{RCA}}_{0}^{\omega}}
\def\RCAO{\textup{\textsf{RCA}}_{0}^{\Omega}}
\def\WKL{\textup{\textsf{WKL}}}
\def\T{\mathcal{T}}
\def\bye{\end{document}}
\def\P{\textup{\textsf{P}}}
\def\R{{\mathbb  R}}
\def\FAN{\textup{\textsf{FAN}}}
\def\UFAN{\textup{\textsf{UFAN}}}
\def\R{{\mathbb{R}}}
\def\({\textup{(}}
\def\){\textup{)}}
\def\st{\textup{st}}
\def\asa{\leftrightarrow}
\def\di{\rightarrow}
\def\QFAC{\textup{\textsf{QF-AC}}}
\def\HAC{\textup{\textsf{HAC}}}
\def\INT{\textup{\textsf{int}}}
\numberwithin{equation}{section}
\numberwithin{thm}{section}
\numberwithin{equation}{section}
\begin{document}
\title{More than bargained for in Reverse Mathematics}

\author{Sam Sanders}
\address{Department of Mathematics (S22), Krijgslaan 281, Ghent University, 9000 Ghent, Belgium \& Munich Center for Mathematical Philosophy, LMU Munich, Germany}
\email{sasander@me.com, http://cage.ugent.be/$\sim$sasander/}
\keywords{Reverse Mathematics, coding, Nonstandard Analysis, second-order arithmetic, higher-order arithmetic}
\subjclass[2010]{03B30 and 26E35}
\maketitle
\thispagestyle{empty}
\begin{abstract}
\emph{Reverse Mathematics} (RM for short) is a program in the foundations of mathematics with the aim of finding the minimal axioms required for proving theorems
about countable and separable objects.  
RM usually takes place in \emph{second-order arithmetic} and due to this choice of framework, continuous real-valued functions have to be represented by so-called codes.  
Kohlenbach has shown that the RM-definition of continuity-via-codes constitutes a slight constructive enrichment of the epsilon-delta definition, namely in the 
form of a \emph{modulus} of continuity.  In this paper, we show that the RM-definition of continuity also gives rise to a `nonstandard' enrichment in the form of \emph{nonstandard continuity} from Nonstandard Analysis.  
This observation allows us to (i) establish that RM-theorems related to continuity are implicitly higher-order statements, (ii) prove equivalences between RM-theorems concerning continuity and their associated higher-order versions, and (iii) obtain \emph{explicit} equivalences between higher-order theorems from the equivalence between the corresponding RM-theorems.  
Moreover, we show that it is exactly the \textup{RM}-definition of continuity-via-codes which gives rise to these higher-order phenomena.  
In conclusion, we establish that the practice of coding in RM, designed to obviate higher-type objects, \emph{actually introduces a host of new ones}.  
\end{abstract}



\section{Introduction}\label{intro}
In two words, the topic of this paper is the implicit presence of \emph{higher-order} statements in \emph{second-order} Friedman-Simpson \emph{Reverse Mathematics}.  
In particular, we show that the definition of \emph{continuity-via-codes} used in the latter, gives rise to higher-order statements.    
We first introduce the aforementioned italicised notions.  

\medskip

\emph{Reverse Mathematics} (RM for short) is a program in the foundations of mathematics initiated by Friedman (\cites{fried, fried2}), and developed extensively by Simpson and others (See \cites{simpson1, simpson2} for an overview and introduction).  
The aim of RM is to find the axioms necessary to prove a given theorem of ordinary, i.e.\ about countable and separable objects, mathematics, assuming the `base theory' $\RCA_{0}$, a weak system of computable mathematics.  
RM usually takes place in \emph{second-order arithmetic}, i.e.\ a system of first-order logic with two sorts: natural numbers and sets of the latter (equivalently: Only type 0 and 1 objects are available).  
By contrast, in Kohlenbach's base theory $\RCAo$ for \emph{higher-order} RM (See \cite{kohlenbach2} for details), all finite types are available.  Thus, objects of type `higher than $1$' shall be informally referred to as `higher-order'.   

\medskip

In RM, real numbers are represented by fast-converging Cauchy sequences as in \cite{simpson2}*{II.4.4};  This implies that real-valued functions are not `directly' available in RM (as they have type $1\di 1$).  
To this end, \emph{continuous functions} are represented by (type 1) codes as in \cite{simpson2}*{II.6.1}.  
In \cite{kohlenbach4}*{\S4}, Kohlenbach proves that this RM-definition of continuity involves a slight constructive enrichment of the usual epsilon-delta definition of continuity, namely in the form of a \emph{modulus} of continuity.   
This constructive enrichment should be compared to Simpson's claim to the contrary in \cite{simpson2}*{I.8.9 and IV2.8}.

\medskip

In Section \ref{richment}, we show that the RM-definition of continuity gives rise to a `nonstandard' enrichment, namely that standard RM-continuous functions are \emph{nonstandard} \emph{continuous}, and vice versa, inside a weak system of Nonstandard Analysis based on $\RCAo$.  
In Section \ref{thmbase}, we explore how this observation gives rise to a higher-order statement, namely the existence of a modulus-of-continuity functional, implicit in the base theory $\RCA_{0}$.   
Similar higher-order statements are implicit in other RM-theorems (not necessarily concerning continuity), as explored in Remark~\ref{feralll}.  

\medskip

Now, some readers would perhaps be more easily convinced of the veracity of our claim (that higher-order statements are implicit in second-order RM) if no nonstandard methods were used.  
Hence, in Section \ref{sexpl}, we prove that the statement 
\[
\text{\emph{Every \textup{RM}-continuous function on Cantor space is uniformly \textup{RM}-continuous.}}
\]
is equivalent, in Kohlenbach's higher order RM, to the statement that 
\begin{center}
\emph{There is a type three functional which witnesses the uniform \textup{RM}-continuity of every \emph{RM}-continuous functional on Cantor space.}
\end{center}
This equivalence does not involve (but is inspired by) Nonstandard Analysis.  
We also show that the previous equivalence only goes through because of the use of RM-continuity, as the latter has greatly reduced quantifier complexity compared to the usual definition of continuity.  
Similar equivalences hold for other RM-theorems related to continuity.  We should also point 
out that while the RM-definition of continuity represents continuous type $2$ and $1\di 1$ functions by type $1$ objects, the previous equivalence involves \emph{type $3$ objects} (See Theorems \ref{dorkkk} and \ref{soareyouuuuuu}).        

\medskip

In Section \ref{dirfu}, we push our claim (that higher-order statements are implicit in second-order RM) one step further by deriving \emph{explicit}\footnote{An implication $(\exists \Phi)A(\Phi)\di (\exists \Psi)B(\Psi)$ is \emph{explicit} if there is a term $t$ in the language such that additionally $(\forall \Phi)[A(\Phi)\di B(t(\Phi))]$, i.e.\ $\Psi$ can be explicitly defined in terms of $\Phi$.} equivalences between higher-order principles from equivalences in second-order RM.
Surprisingly, these explicit equivalences are derived using \emph{results in Nonstandard Analysis} from \cite{brie}.  In particular, the results in Section \ref{dirfu} hint at a hitherto unknown \emph{computational} aspect of Nonstandard Analysis, studied further in \cite{samzoo, sambon,samfee}.  

\medskip

In conclusion, the results in this paper suggest that \emph{insisting} on formalising mathematics in second-order arithmetic is self-defeating:  The RM-definition of continuity brings in higher types `through the back door'.  
Note that we do not claim that such a formalisation is pointless:  We merely point out that the reduction in ontological commitment (provided by the use of second-order arithmetic in RM) should not be exaggerated, especially 
since the coding practice of RM gives rise to type $3$ objects, as shown in Theorems \ref{dorkkk} and \ref{soareyouuuuuu}.

%
\section{About and around the base theory $\RCAO$}\label{base}
In this section, we introduce the base theory $\RCAO$ in which we will work.  
In two words, $\RCAO$ is a conservative extension of Kohlenbach's base theory $\RCAo$ from \cite{kohlenbach2} with certain axioms from Nelson's \emph{Internal Set Theory} (\cite{wownelly}) based on the approach from \cites{brie,bennosam}.    
The system $\RCAo$ is in turn a conservative extension of $\RCA_{0}$ for the second-order language by \cite{kohlenbach2}*{Prop.\ 3.1}.  

\subsection{Internal set theory and its fragments}\label{P}
In this section, we discuss Nelson's \emph{internal set theory}, first introduced in \cite{wownelly}, and its fragments from \cite{brie}.  

\medskip

In Nelson's \emph{syntactic} approach to Nonstandard Analysis (\cite{wownelly}), as opposed to Robinson's semantic one (\cite{robinson1}), a new predicate `st($x$)', read as `$x$ is standard' is added to the language of \textsf{ZFC}, the usual foundation of mathematics.  
The notations $(\forall^{\st}x)$ and $(\exists^{\st}y)$ are short for $(\forall x)(\st(x)\di \dots)$ and $(\exists y)(\st(y)\wedge \dots)$.  A formula is called \emph{internal} if it does not involve `st', and \emph{external} otherwise.   
The three external axioms \emph{Idealisation}, \emph{Standard Part}, and \emph{Transfer} govern the new predicate `st';  They are respectively defined\footnote{The superscript `fin' in \textsf{(I)} means that $x$ is finite, i.e.\ its number of elements are bounded by a natural number.} as:  
\begin{enumerate}
\item[\textsf{(I)}] $(\forall^{\st~\textup{fin}}x)(\exists y)(\forall z\in x)\varphi(z,y)\di (\exists y)(\forall^{\st}x)\varphi(x,y)$, for internal $\varphi$ with any (possibly nonstandard) parameters.  
\item[\textsf{(S)}] $(\forall x)(\exists^{\st}y)(\forall^{\st}z)(z\in x\asa z\in y)$.
\item[\textsf{(T)}] $(\forall^{\st}x)\varphi(x, t)\di (\forall x)\varphi(x, t)$, where $\varphi$ is internal,  $t$ captures \emph{all} parameters of $\varphi$, and $t$ is standard.  
\end{enumerate}
The system \textsf{IST} is (the internal system) \textsf{ZFC} extended with the aforementioned external axioms;  
The former is a conservative extension of \textsf{ZFC} for the internal language, as proved in \cite{wownelly}.    

\medskip

In \cite{brie}, the authors study G\"odel's system $\textsf{T}$ extended with special cases of the external axioms of \textsf{IST}.  
In particular, they consider nonstandard extensions of the (internal) systems \textsf{E-HA}$^{\omega}$ and $\textsf{E-PA}^{\omega}$, respectively \emph{Heyting and Peano arithmetic in all finite types and the axiom of extensionality}.       
We refer to \cite{brie}*{\S2.1} for the exact details of these (mainstream in mathematical logic) systems.  
We do mention that in these systems of higher-order arithmetic, each variable $x^{\rho}$ comes equipped with a superscript denoting its type, which is however often implicit.  
As to the coding of multiple variables, the type $\rho^{*}$ is the type of finite sequences of type $\rho$, a notational device used in \cite{brie} and this paper;  Underlined variables $\underline{x}$ consist of multiple variables of (possibly) different type.  

\medskip

In the next section, we introduce the systems $\RCAO$ assuming familiarity with the higher-type framework of G\"odel's $\textsf{T}$ (See e.g.\ \cite{brie}*{\S2.1}).    
\subsection{The base theory $\RCAO$}
In this section, we introduce the system $\RCAO$.  We first discuss some of the external axioms studied in \cite{brie}.  
First of all, Nelson's axiom \emph{Standard part} is weakened to $\HAC_{\INT}$ as follows:
\be\tag{$\HAC_{\INT}$}
(\forall^{\st}x^{\rho})(\exists^{\st}y^{\tau})\varphi(x, y)\di (\exists^{\st}F^{\rho\di \tau^{*}})(\forall^{\st}x^{\rho})(\exists y^{\tau}\in F(x))\varphi(x,y),
\ee
where $\varphi$ is any internal formula.  Note that $F$ only provides a \emph{finite sequence} of witnesses to $(\exists^{\st}y)$, explaining its name \emph{Herbrandized Axiom of Choice}.      
Secondly,  Nelson's axiom idealisation \textsf{I} appears in \cite{brie} as follows:  
\be\tag{\textsf{I}}
(\forall^{\st} x^{\sigma^{*}})(\exists y^{\tau} )(\forall z^{\sigma}\in x)\varphi(z,y)\di (\exists y^{\tau})(\forall^{\st} x^{\sigma})\varphi(x,y), 
\ee
where $\varphi$ is again an internal formula.  
Finally, as in \cite{brie}*{Def.\ 6.1}, we have the following definition.
\bdefi\label{debs}
The set $\T^{*}$ is defined as the collection of all the constants in the language of $\textsf{E-PA}^{\omega*}$.  
The system $ \textsf{E-PA}^{\omega*}_{\st} $ is defined as $ \textsf{E-PA}^{\omega{*}} + \T^{*}_{\st} + \textsf{IA}^{\st}$, where $\T^{*}_{\st}$
consists of the following axiom schemas.
\begin{enumerate}
\item The schema\footnote{The language of $\textsf{E-PA}_{\st}^{\omega*}$ contains a symbol $\st_{\sigma}$ for each finite type $\sigma$, but the subscript is always omitted.  Hence $\T^{*}_{\st}$ is an \emph{axiom schema} and not an axiom.\label{omit}} $\st(x)\wedge x=y\di\st(y)$,
\item The schema providing for each closed term $t\in \T^{*}$ the axiom $\st(t)$.
\item The schema $\st(f)\wedge \st(x)\di \st(f(x))$.
\end{enumerate}
The external induction axiom \textsf{IA}$^{\st}$ is as follows.  
\be\tag{\textsf{IA}$^{\st}$}
\Phi(0)\wedge(\forall^{\st}n^{0})(\Phi(n) \di\Phi(n+1))\di(\forall^{\st}n^{0})\Phi(n).     
\ee
\edefi
For the full system $\P\equiv \textsf{E-PA}^{\omega*}_{\st} +\HAC_{\INT} +\textsf{I}+\textsf{IA}^{\st}$, we have the following theorem.  
The superscript `$S_{\st}$' in the theorem is the syntactic translation defined as follows.
\bdefi
%
%
If $\Phi(\underline{a})$ and $\Psi(\underline{b})$  in the language of $\P$ have interpretations
\be\label{dombu}
\Phi(\underline{a})^{S_{\st}}\equiv (\forall^{\st}\underline{x})(\exists^{\st}\underline{y})\varphi(\underline{x},\underline{y},\underline{a}) \textup{ and } \Psi(\underline{b})^{S_{\st}}\equiv (\forall^{\st}\underline{u})(\exists^{\st}\underline{v})\psi(\underline{u},\underline{v},\underline{b}),
\ee
then they interact as follows with the logical connectives by \cite{brie}*{Def.\ 7.1}:
\begin{enumerate}[(i)]
\item $\psi^{S_{\st}}:=\psi$ for atomic internal $\psi$.  
\item$ \big(\st(z)\big)^{S_{\st}}:=(\exists^{\st}x)(z=x)$.
\item $(\neg \Phi)^{S_{\st}}:=(\forall^{\st} \underline{Y})(\exists^{\st}\underline{x})(\forall \underline{y}\in \underline{Y}[\underline{x}])\neg\varphi(\underline{x},\underline{y},\underline{a})$.  
\item$(\Phi\vee \Psi)^{S_{\st}}:=(\forall^{\st}\underline{x},\underline{u})(\exists^{\st}\underline{y}, \underline{v})[\varphi(\underline{x},\underline{y},\underline{a})\vee \psi(\underline{u},\underline{v},\underline{b})]$
\item $\big( (\forall z)\Phi \big)^{S_{\st}}:=(\forall^{\st}\underline{x})(\exists^{\st}\underline{y})(\forall z)(\exists \underline{y}'\in \underline{y})\varphi(\underline{x},\underline{y}',z)$
\end{enumerate}
\edefi
\begin{thm}\label{consresult}
Let $\Phi(\tup a)$ be a formula in the language of \textup{\textsf{E-PA}}$^{\omega*}_{\st}$ and suppose $\Phi(\tup a)^\Sh\equiv\forallst \tup x \, \existsst \tup y \, \varphi(\tup x, \tup y, \tup a)$. If $\Delta_{\intern}$ is a collection of internal formulas and
\be\label{antecedn}
\P + \Delta_{\intern} \vdash \Phi(\tup a), 
\ee
then one can extract from the proof a sequence of closed terms $t$ in $\mathcal{T}^{*}$ such that
\be\label{consequalty}
\textup{\textsf{E-PA}}^{\omega*} + \Delta_{\intern} \vdash\  \forall \tup x \, \exists \tup y\in \tup t(\tup x)\ \varphi(\tup x,\tup y, \tup a).
\ee
\end{thm}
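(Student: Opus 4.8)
The plan is to obtain this as the \emph{soundness theorem} for the nonstandard \emph{Dialectica}-style interpretation $\Phi\mapsto\Phi^{S_{\st}}$ whose action on the connectives and quantifiers is recorded in the definition preceding the statement; this is the interpretation introduced in \cite{brie}, and the argument below is, in essence, the term-extraction/soundness result of that paper, adapted to the system $\P\equiv\textsf{E-PA}^{\omega*}_{\st}+\HAC_{\INT}+\textsf{I}+\textsf{IA}^{\st}$.

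First I would set up the interpretation carefully: by recursion on logical structure, following clauses (i)--(v) above, each formula $\Phi(\underline a)$ in the language of $\textsf{E-PA}^{\omega*}_{\st}$ is assigned an internal formula $\varphi(\underline x,\underline y,\underline a)$ together with fresh tuples $\underline x,\underline y$ so that $\Phi^{S_{\st}}\equiv(\forall^{\st}\underline x)(\exists^{\st}\underline y)\varphi$. The essential structural feature is that the existential witnesses $\underline y$ are always \emph{finite sequences}, so that contraction $\Phi\to\Phi\wedge\Phi$ and, more generally, the duplication of hypotheses can be interpreted by concatenating such sequences. This `Herbrandized' character — already visible in the shape of $\HAC_{\INT}$, where the choice functional returns a finite set of candidate witnesses — is exactly what makes the interpretation survive classical logic, where the plain \emph{Dialectica} interpretation would not.

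The heart of the proof is an induction on the length of a derivation of $\Phi(\underline a)$ in $\P+\Delta_{\intern}$, showing that at each step the interpretation of the conclusion is realized by closed terms of $\mathcal{T}^{*}$ built from those realizing the premises. There are three kinds of case. For the axioms and rules of classical higher-order predicate logic one checks that realizers propagate; the non-routine cases are contraction (handled by the finite-sequence concatenations just mentioned), the quantifier rules, and clause (v) for $(\forall z)\Phi$, which is precisely where closed terms $t$ satisfying $\forall z\,\exists y'\in t(z)\,\varphi$ are manufactured. For the nonstandard axioms one must exhibit, \emph{explicitly}, a realizer verifiable in $\textsf{E-PA}^{\omega*}$: for $\HAC_{\INT}$ essentially a pairing/identity term, for idealisation $\textsf{I}$ one invokes the internal finite collection available in $\textsf{E-PA}^{\omega*}$, for the standardness schemas $\T^{*}_{\st}$ the witnesses are the very closed terms named in those schemas, and for external induction $\textsf{IA}^{\st}$ one iterates the realizer of the induction step using the recursors of G\"odel's $\textsf{T}$. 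Finally, each internal $\psi\in\Delta_{\intern}$ satisfies $\psi^{S_{\st}}\equiv\psi$ by clause (i), so it is trivially self-realizing and passes through the induction unchanged — which is exactly what allows $\Delta_{\intern}$ to be carried along and to reappear in \eqref{consequalty}. Composing the realizers along the whole derivation yields closed terms $\underline t\in\mathcal{T}^{*}$ for which the soundness argument directly produces a proof of $\forall\underline x\,\exists\underline y\in\underline t(\underline x)\,\varphi(\underline x,\underline y,\underline a)$ in $\textsf{E-PA}^{\omega*}+\Delta_{\intern}$, i.e.\ \eqref{consequalty}.

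The step I expect to be the main obstacle is the verification, inside the induction, that the nonstandard axioms $\textsf{I}$ and $\textsf{IA}^{\st}$ admit closed-term realizers in $\mathcal{T}^{*}$: this is where the specific design choices of the interpretation (finite sequences of witnesses, the particular clauses for negation and for $\forall$) must genuinely be used, and it is accompanied by the delicate bookkeeping needed to keep every extracted term closed and of the correct type throughout the derivation.
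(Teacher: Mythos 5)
Your proposal is correct and amounts to the same approach as the paper: the paper disposes of this theorem in one line by citing the soundness theorem (Theorem 7.7) of van den Berg--Briseid--Safarik \cite{brie}, and what you sketch --- induction on derivations, Herbrandized finite-sequence witnesses absorbing contraction, explicit closed-term realizers for $\HAC_{\INT}$, \textsf{I}, $\T^{*}_{\st}$ and \textsf{IA}$^{\st}$, with the internal axioms carried along as self-interpreting side hypotheses --- is precisely an outline of the proof of that cited result, adapted to $\P$. The only cosmetic imprecision is that internal formulas are self-interpreting not by clause (i) alone (which covers only atomic formulas) but by checking that every clause preserves internality and empty witness tuples, the verification the paper itself relegates to Corollary \ref{consresultcor}.
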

\begin{proof}
Immediate by \cite{brie}*{Theorem 7.7}.  
\end{proof}
The proofs of the soundness theorems in \cite{brie}*{\S5-7} provide an algorithm $\mathcal{A}$ to obtain the term $t$ from the theorem.  
The following corollary is only mentioned in \cite{brie} for Heyting arithmetic, but is also valid for Peano arithmetic.  
\begin{cor}\label{consresultcor}
If for internal $\psi$ the formula $\Phi(\underline{a})\equiv(\forall^{\st}\underline{x})(\exists^{\st}\underline{y})\psi(\underline{x},\underline{y}, \underline{a})$ satisfies \eqref{antecedn}, then 
$(\forall \underline{x})(\exists \underline{y}\in t(\underline{x}))\psi(\underline{x},\underline{y},\underline{a})$ is proved in the corresponding formula \eqref{consequalty}.  
\end{cor}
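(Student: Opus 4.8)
The plan is to obtain Corollary~\ref{consresultcor} as a direct specialisation of Theorem~\ref{consresult}: the only thing that needs checking is that when the conclusion $\Phi(\underline{a})$ is \emph{already} of the normal form $(\forall^{\st}\underline{x})(\exists^{\st}\underline{y})\psi(\underline{x},\underline{y},\underline{a})$ with $\psi$ internal, its $S_{\st}$-translation is, provably in $\textup{\textsf{E-PA}}^{\omega*}_{\st}$, again $\Phi(\underline{a})$ itself, so that one may read off $\varphi:=\psi$ in the conclusion \eqref{consequalty}.

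First I would recall the routine fact --- an easy induction on the build-up of internal formulas, using clause~(i) of the translation together with the clauses governing the defined internal connectives $\wedge,\di,\exists$ --- that $S_{\st}$ acts as the identity on internal formulas, i.e.\ $\psi^{S_{\st}}\equiv\psi$ for every internal $\psi$. Next I would unwind the translation of $\Phi(\underline{a})$: reading $(\exists^{\st}\underline{y})\psi$ as $(\exists\underline{y})(\st(\underline{y})\wedge\psi)$ and $(\forall^{\st}\underline{x})(\cdots)$ as $(\forall\underline{x})(\st(\underline{x})\di\cdots)$, one applies clauses~(ii)--(v) and checks that the finite-sequence bookkeeping introduced by clause~(iii) (for the negations hidden in $\di$) and by clause~(v) (for the outer $(\forall\underline{x})$) is vacuous here, precisely because the internal matrix $\psi$ carries no existential content of its own. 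Hence $\Phi(\underline{a})^{S_{\st}}$ reduces, over $\textup{\textsf{E-PA}}^{\omega*}_{\st}$, to $(\forall^{\st}\underline{x})(\exists^{\st}\underline{y})\psi(\underline{x},\underline{y},\underline{a})$, and one indeed takes $\varphi:=\psi$.

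With this identification in hand, Theorem~\ref{consresult} applies verbatim: from a proof of \eqref{antecedn} it extracts closed terms $t\in\mathcal{T}^{*}$ with $\textup{\textsf{E-PA}}^{\omega*}+\Delta_{\intern}\vdash(\forall\underline{x})(\exists\underline{y}\in t(\underline{x}))\psi(\underline{x},\underline{y},\underline{a})$, which is exactly the claimed instance of \eqref{consequalty}. Since \cite{brie} records this reduction only over Heyting arithmetic, I would finish by observing that neither the computation of $\Phi^{S_{\st}}$ above nor the soundness argument underlying Theorem~\ref{consresult} uses intuitionistic logic, so the statement transfers verbatim to the classical system $\P$.

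I expect the (still modest) main obstacle to be exactly this translation bookkeeping: one must make sure that the nested negations implicit in $\st(\underline{x})\di\cdots$ and the finite-sequence quantifiers $(\forall\underline{y}\in\underline{Y}[\underline{x}])$ produced by clause~(iii) genuinely collapse rather than leaving a more complex matrix --- the key point being that the $\exists^{\st}$-witness block of $\Phi$ already sits in the position the interpretation would place it, so no interpretation overhead is incurred and the extracted term $t$ bounds precisely the original witnesses $\underline{y}$.
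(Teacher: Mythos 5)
Your proposal follows the paper's own route exactly: verify that the $S_{\st}$-interpretation of the already-normal-form $\Phi(\underline{a})$ is (provably) $\Phi(\underline{a})$ itself via the clauses of the translation, then invoke Theorem~\ref{consresult} with $\varphi:=\psi$ (the paper simply defers the clause-by-clause bookkeeping to an external verification). Your added remark that the argument is insensitive to classical versus intuitionistic logic matches the paper's observation that the result, stated in \cite{brie} only for Heyting arithmetic, also holds for Peano arithmetic.
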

\begin{proof}
Clearly, if for $\psi$ and $\Phi$ as given we have $\Phi(\underline{a})^{S_{\st}}\equiv \Phi(\underline{a})$, then the corollary follows immediately from the theorem.  
A tedious but straightforward verification using the clauses (i)-(v) in \cite{brie}*{Def.\ 7.1} establishes that indeed $\Phi(\underline{a})^{S_{\st}}\equiv \Phi(\underline{a})$.  
This verification may be found in \cite{samzoo}*{\S2}.    
\end{proof}
Finally, the previous theorems do not really depend on the presence of full Peano arithmetic.  
Indeed, let \textsf{E-PRA}$^{\omega}$ be the system defined in \cite{kohlenbach2}*{\S2} and let \textsf{E-PRA}$^{\omega*}$ be its definitional extension with types for finite sequences as in \cite{brie}*{\S2}.  
\begin{cor}\label{consresultcor2}
The previous theorem and corollary go through for $\P$ replaced by $\RCAO\equiv \textsf{\textup{E-PRA}}^{\omega*}+\T_{\st}^{*} +\HAC_{\INT} +\textsf{\textup{I}}+\QFAC^{1,0}$.  
\end{cor}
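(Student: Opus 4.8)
The plan is to re-examine the soundness proof of \cite{brie}*{Theorem 7.7} — which is what yields Theorem~\ref{consresult} and Corollary~\ref{consresultcor} — and to check that it is modular enough to survive weakening its internal base theory. The first move is cheap: since $\QFAC^{1,0}$ is an \emph{internal} formula, a derivation in $\RCAO+\Delta_{\intern}$ is literally a derivation in $\textsf{E-PRA}^{\omega*}+\T^{*}_{\st}+\HAC_{\INT}+\textsf{I}+\Delta_{\intern}'$, where $\Delta_{\intern}':=\Delta_{\intern}\cup\{\QFAC^{1,0}\}$ is still a collection of internal formulas. Hence $\QFAC^{1,0}$ never has to be interpreted: it is carried along and reappears in the internal theory of the conclusion, which then reads $\RCAo+\Delta_{\intern}$. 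What remains is to verify that the extraction algorithm $\mathcal{A}$ behind \cite{brie}*{Theorem 7.7} goes through with $\textsf{E-PA}^{\omega*}$ replaced throughout by $\textsf{E-PRA}^{\omega*}$.

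For this, recall that the soundness proof is an induction on the derivation of $\Phi(\underline a)$, assigning to each line a tuple of realising terms. The many-sorted logical axioms and rules, and the bookkeeping of the $(\cdot)^{S_{\st}}$-clauses, are treated verbatim; the purely internal axioms of $\textsf{E-PRA}^{\omega*}$ are quantifier-free, so their interpretation is trivial and the attached correctness statements are proved by quantifier-free computation and quantifier-free induction, both available in $\textsf{E-PRA}^{\omega*}$. The realisers exhibited in \cite{brie}*{\S5--7} for the genuinely nonstandard axioms $\T^{*}_{\st}$, $\textsf{I}$ and $\HAC_{\INT}$ are built solely from the combinators, the finite-sequence operations (concatenation, length, projections, bounded maxima) and the type-$0$ recursor $R_{0}$, all of which are among the constants of $\textsf{E-PRA}^{\omega*}$, and their correctness is again checked by $\textsf{E-PRA}^{\omega*}$-provable means. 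The structural reason the whole argument survives is that $\RCAO$ omits the \emph{external} induction axiom $\textsf{IA}^{\st}$, whose functional interpretation is the one item in \cite{brie} that requires recursors at arbitrary types, i.e.\ the full G\"odel $\textsf{T}$; with it gone, $R_{0}$ suffices everywhere. Thus the induction on derivations outputs a tuple $t$ of closed terms of $\textsf{E-PRA}^{\omega*}$ with $\textsf{E-PRA}^{\omega*}+\Delta_{\intern}'\vdash(\forall\underline x)(\exists\underline y\in t(\underline x))\,\varphi(\underline x,\underline y,\underline a)$, the required analogue of \eqref{consequalty}; and the proof of Corollary~\ref{consresultcor} transfers unchanged, since the verification there that $\Phi(\underline a)^{S_{\st}}\equiv\Phi(\underline a)$ for $\Phi$ of the displayed shape uses only the clauses (i)--(v) of \cite{brie}*{Def.\ 7.1} and is independent of the base theory.

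The main obstacle is the claim about those three realisers: one must actually inspect the terms produced in \cite{brie} for $\T^{*}_{\st}$, $\HAC_{\INT}$ and $\textsf{I}$ — in particular how the clause $\big((\forall z)\Phi\big)^{S_{\st}}$, with its bounded search over finite sequences of witnesses, is realised — and confirm that no recursor at a type beyond $0$ is used covertly. This is bookkeeping rather than a new idea: it is in essence already carried out in \cite{brie} for the Heyting-arithmetic fragments, the passage to classical logic is handled there for $\textsf{E-PA}^{\omega*}$ and is insensitive to the restriction to primitive recursion, and the relevant $(\cdot)^{S_{\st}}$-computations can additionally be cross-checked against the verification in \cite{samzoo}*{\S2}.
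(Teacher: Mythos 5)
Correct, and essentially the paper's own route: the paper's proof is just the observation that the soundness proof of \cite{brie}*{Theorem 7.7} goes through for any fragment of $\textsf{E-PA}^{\omega*}$ containing \textsf{EFA} (i.e.\ $\textsf{I}\Delta_{0}+\textsf{EXP}$), exponentiation being all that is needed to manipulate finite sequences, which is exactly the modularity claim you verify in more detail. Your extra bookkeeping (treating $\QFAC^{1,0}$ as an internal axiom carried along unchanged, and noting that dropping $\textsf{IA}^{\st}$ removes the only demand for recursors above type $0$) is a finer-grained elaboration of that same observation rather than a different argument.
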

\begin{proof}
The proof of \cite{brie}*{Theorem 7.7} goes through for any fragment of \textsf{E-PA}$^{\omega{*}}$ which includes \textsf{EFA}, sometimes also called $\textsf{I}\Delta_{0}+\textsf{EXP}$.  
In particular, the exponential function is (all what is) required to `easily' manipulate finite sequences.    
\end{proof}
Finally, we note that Ferreira and Gaspar present a system similar to $\P$ in \cite{fega}.  
We plan to study this system in \cite{samfee}, but have no use for it in this paper.

\subsection{Notations}
We finish this section with two remarks on notation.  First of all, we shall use Nelson's notations, as sketched now.      
\begin{rem}[Notations]\label{notawin}\rm
We write $(\forall^{\st}x^{\tau})\Phi(x^{\tau})$ and $(\exists^{\st}x^{\sigma})\Psi(x^{\sigma})$ as short for 
$(\forall x^{\tau})\big[\st(x^{\tau})\di \Phi(x^{\tau})\big]$ and $(\exists^{\st}x^{\sigma})\big[\st(x^{\sigma})\wedge \Psi(x^{\sigma})\big]$.     
We also write $(\forall x^{0}\in \Omega)\Phi(x^{0})$ and $(\exists x^{0}\in \Omega)\Psi(x^{0})$ as short for 
$(\forall x^{0})\big[\neg\st(x^{0})\di \Phi(x^{0})\big]$ and $(\exists x^{0})\big[\neg\st(x^{0})\wedge \Psi(x^{0})\big]$.  Furthermore, if $\neg\st(x^{0})$ (resp.\ $\st(x^{0})$), we also say that $x^{0}$ is `infinite' (resp.\ finite) and write `$x^{0}\in \Omega$'.  
Finally, a formula $A$ is `internal' if it does not involve $\st$, and $A^{\st}$ is defined from $A$ by appending `st' to all quantifiers (except bounded number quantifiers).    
\end{rem}
Secondly, we shall use the usual notations for rational and real numbers and functions as introduced in \cite{kohlenbach2}*{p.\ 288-289} (and \cite{simpson2}*{I.8.1 and II.4.4} for the former).  
\begin{rem}[Real number]\label{keepintireal}\rm
A (standard) real number $x$ is a (standard) fast-converging Cauchy sequence $q_{(\cdot)}^{1}$, i.e.\ $(\forall n^{0}, i^{0})(|q_{n}-q_{n+i})|<_{0} \frac{1}{2^{n}})$.  
We freely use of Kohlenbach's `hat function' from \cite{kohlenbach2}*{p.\ 289} to guarantee that every sequence $f^{1}$ can be viewed as a real.  
Two reals $x, y$ represented by $q_{(\cdot)}$ and $r_{(\cdot)}$ are \emph{equal}, denoted $x=y$, if $(\forall n)(|q_{n}-r_{n}|\leq \frac{1}{2^{n}})$. Inequality $<$ is defined similarly.         
We also write $x\approx y$ if $(\forall^{\st} n)(|q_{n}-r_{n}|\leq \frac{1}{2^{n}})$ and $x\gg y$ if $x>y\wedge x\not\approx y$.  Real-valued functions $F:\R\di \R$ are represented by functionals $\Phi^{1\di 1}$ such that $(\forall x, y)(x=y\di \Phi(x)=\Phi(y))$, i.e.\ equal reals are mapped to equal reals.   
\end{rem}
Thirdly, we use the usual extensional notion of equality.  
\begin{rem}[Equality]\label{equ}\rm
The system $\RCAO$ includes equality between natural numbers `$=_{0}$' as a primitive.  Equality `$=_{\tau}$' for type $\tau$-objects $x,y$ is then defined as:
\be\label{aparth}
[x=_{\tau}y] \equiv (\forall z_{1}^{\tau_{1}}\dots z_{k}^{\tau_{k}})[xz_{1}\dots z_{k}=_{0}yz_{1}\dots z_{k}]
\ee
if the type $\tau$ is composed as $\tau\equiv(\tau_{1}\di \dots\di \tau_{k}\di 0)$.
In the spirit of Nonstandard Analysis, we define `approximate equality $\approx_{\tau}$' as follows:
\be\label{aparth2}
[x\approx_{\tau}y] \equiv (\forall^{\st} z_{1}^{\tau_{1}}\dots z_{k}^{\tau_{k}})[xz_{1}\dots z_{k}=_{0}yz_{1}\dots z_{k}]
\ee
with the type $\tau$ as above.  
All systems under consideration include the \emph{axiom of extensionality} for all $\varphi^{\rho\di \tau}$ as follows:
\be\label{EXT}\tag{\textsf{E}}  
(\forall  x^{\rho},y^{\rho}) \big[x=_{\rho} y \di \varphi(x)=_{\tau}\varphi(y)   \big].
\ee
However, as noted in \cite{brie}*{p.\ 1973}, the so-called axiom of \emph{standard} extensionality \eqref{EXT}$^{\st}$ is problematic and cannot be included in $\RCAO$.   
\end{rem}

\section{Higher-order statements implicit in second-order RM}
In this section, we show that higher-order statements are implicit in second-order RM.  We start by establishing that the RM-definition of continuity 
actually constitutes \emph{nonstandard} continuity (and vice versa) in Section \ref{richment}.  We subsequently show in Section \ref{thmbase} that this `nonstandard' enrichment of continuity gives rise to (equivalent) higher-order statements in the form of a 
modulus-of-continuity functional.  

\subsection{The nonstandard enrichment of continuity}\label{richment}
In this section, we show that the RM-definition of continuity as in \cite{simpson2}*{II.6.1} constitutes a `nonstandard' enrichment of the usual epsilon-delta-definition of continuity.  
In particular, we show that standard functions which are continuous in the sense of RM, i.e.\ given by codes, are also \emph{nonstandard} continuous.    
Conversely, we show that a nonstandard continuous type 2 functional has a code (in the standard world).  

\subsubsection{Definitions and preliminaries}
In this section, we introduce the required definitions and preliminaries. 
First of all, the definition of continuity on Baire space ($\Phi^{2}\in C$ for short) is:
\be\label{contje}
(\forall \alpha^{1})(\exists N^{0})(\forall \beta^{1})(\overline{\alpha}N=_{0}\overline{\beta}N \di \Phi(\alpha)=_{0}\Phi(\beta)).  
\ee
We say that the functional $\Phi^{2}$ is \emph{standard continuous} if it satisfies \eqref{contje}$^{\st}$, and that the functional $\Phi^{2}$ is \emph{nonstandard continuous} if 
\be\label{contje2}
(\forall^{\st} \alpha^{1})(\forall \beta^{1})({\alpha}\approx_{1}{\beta} \di \Phi(\alpha)=_{0}\Phi(\beta)).  
\ee
where $\alpha\approx_{1}\beta$ if $(\forall^{\st}n^{0})[\alpha(n)=\beta(n)]$.  If \eqref{contje} holds limited to binary sequences, we say that $\Phi$ is \emph{continuous on Cantor space}, and write `$\Phi\in C(2^{N})$' for short.   

\medskip

In the next section, we show that standard functions continuous in the sense of RM are also nonstandard continuous as in \eqref{contje2}.
In Theorem~\ref{krof2} below, we also show the `converse', namely that every type~2 functional which is nonstandard continuous as in \eqref{contje2}, has a RM-code (relative to `st').  
By \cite{kohlenbach4}*{Prop.~4.6}, nonstandard continuity thus constitutes a constructive enrichment.  


\medskip

Secondly, with regard to known results, Kohlenbach shows in \cite{kohlenbach4}*{\S4} that the RM-definition of continuity includes a constructive enrichment in the form of a \emph{modulus of \(pointwise\) continuity}, 
in contrast to Simpson's claim (See \cite{simpson2}*{I.8.9 and  IV.2.8}) that Reverse Mathematics analyses theorems `as they stand', i.e.\ without constructive enrichments.  
Notwithstanding this negative result, Kohlenbach also shows in \cite{kohlenbach4}*{\S4} that the enrichment present in \cite{simpson2}*{II.6.1} is in general harmless.  In particular, there is no change to the RM-equivalences of weak K\"onig's lemma.       

\medskip

In more detail, Friedman-Simpson style Reverse Mathematics takes place in (subsystems of) second-order arithmetic, i.e.\ only type $0$ and $1$ (numbers and sets of the latter) objects are available.  
Simpson motivates this choice as follows:
\begin{quote}
[the second-order] language is the weakest one that is rich enough to express and develop the bulk of core mathematics. (\cite{simpson2}*{Preface})
\end{quote}
As a result of this choice of framework, one cannot define real-valued functions `directly' in RM, as the latter objects have type $1\di 1$.  
For this reason, a real-valued continuous function is represented in Reverse Mathematics by a (type 1) \emph{code} as in \cite{simpson2}*{II.6.1}.  
Kohlenbach shows in \cite{kohlenbach4}*{Prop.\ 4.4} that the existence of a code for a continuous functional $\Phi^{2}$, is equivalent to the 
existence of an \emph{associate} for $\Phi$ as in \cite{kohlenbach4}*{Def.\ 4.3}, and equivalent to the existence of a modulus of continuity for $\Phi$.  
Since associates are more amenable to our framework, we shall therefore work with the former, instead of RM-codes.  The definition is as follows.  
\bdefi\label{defke}
The function $\alpha^{1}$ is an associate of a continuous functional $\Phi^{2}$ if:
\begin{enumerate}[(i)]
\item $(\forall \beta^{1})(\exists k^{0})\alpha(\overline{\beta} k)>0$,\label{defitem2}
\item $(\forall \beta^{1}, k^{0})(\alpha(\overline{\beta} k)>0 \di \Phi(\beta)+1=_{0}\alpha(\overline{\beta} k))$.\label{defitem}
\end{enumerate}
\edefi
Note that we assume that every associate is a neighbourhood function as in \cite{kohlenbach4}.  
The range of $\beta$ in the previous definition may be restricted if $\Phi^{2}$ is only continuous on a subspace.  
Finally, if the two items from Definition \ref{defke} only hold relative to `st', then we say that $\alpha^{1}$ is an associate for $\Phi^{2}$ \emph{relative to `\st'}.    

\medskip
%
%

\subsubsection{Nonstandard continuity and known associates}\label{nerdo}
We now show that standard functions continuous in the sense of RM are nonstandard continuous as in \eqref{contje2}, and vice versa.
Our development takes place inside $\RCAO$.  For simplicity, we work over Baire space rather than with real numbers.

\medskip

Since the Reverse Mathematics definition of `continuity-via-codes' implicitly involves a continuous modulus of continuity (again, by \cite{kohlenbach4}*{Prop.\ 4.4}), we shall make the latter explicit.  
Hence, we represent a continuous function $\phi$ on Baire space via a pair of codes $(\alpha^{1}, \beta^{1})$, where $\alpha$ codes $\phi$ and $\beta$ codes its continuous modulus of pointwise continuity $\omega_{\phi}$.      
In more technical detail, $\alpha$ and $\beta$ satisfy 
\[
(\forall \gamma^{1})(\exists N^{0})\alpha(\overline{\gamma}N)>0\wedge (\forall \gamma^{1})(\exists N^{0})\beta(\overline{\gamma}N)>0,
\]
and the values of $\omega_{\phi}$ and $\phi$ at $\gamma^{1}\leq_{1}1$, denoted $\omega_{\phi}(\gamma)$ and $\phi(\gamma)$, are $\beta(\overline{\gamma}k)-1$ and $\alpha(\overline{\gamma}k)-1$ for any $k^{0}$ such that  $\beta(\overline{\gamma}k)>0$ and $\alpha(\overline{\gamma}k)>0$.    
With the previous definitions in place, the following formula makes sense and expresses that $\omega_{\phi}$ is the modulus of continuity of $\phi$:
\be\label{krif12}
(\forall \zeta^{1}, \gamma^{1})(\overline{\zeta}\omega_{\phi}(\zeta)=\overline{\gamma}\omega_{\phi}(\zeta)\di \phi(\zeta)=\phi(\gamma)).
\ee  
In short, the representation of a functional $\phi$ on Baire space via the RM-definition of continuity is equivalent to our representation \eqref{krif12}.  

\medskip

Now, a basic property of any \emph{standard} functional is that it \emph{maps standard inputs to standard outputs}.  
This `standardness' property is a basic axiom\footnote{In particular, the axiom $(\forall^{\st} x^{\sigma}, y^{\sigma\di \tau})(\st(y(x)))$ is  part of $\mathcal{T}_{\st}$ by \cite{bennosam}*{\S2} and \cite{brie}*{\S2}.} of all the systems in \cites{brie, bennosam} and a cornerstone of Nonstandard Analysis.  
Thus, to represent a \emph{standard} continuous function $\phi$ on Cantor space, we should require that $\phi(\gamma)$ and $\omega_{\phi}(\gamma)$ are standard for standard $\gamma^{1}$.  
To accomplish this, we require that $\alpha$ and $\beta$ 
additionally satisfy:
\begin{align}\label{kruks2}
(\forall^{\st} \gamma^{1})(\exists N^{0})(\exists^{\st}K)&[K\geq \alpha(\overline{\gamma}N)>0] \\
&\wedge(\forall^{\st} \gamma^{1})(\exists N^{0})(\exists^{\st}K^{0})[K\geq \beta(\overline{\gamma}N)>0].\notag
\end{align}
Obviously, there are other ways of guaranteeing that $\phi$ and $\omega_{\phi}$ map standard sequences to standard numbers.
Nonetheless, whichever way we guarantee that $\omega_{\phi}$ and $\phi$ are standard for standard input, \eqref{krif12} yields that 
\be\label{krif13}
(\forall^{\st} \zeta^{1})(\exists^{\st}N^{0})(\forall  \gamma^{1})(\overline{\zeta}N=\overline{\gamma}N\di \phi(\zeta)=_{0}\phi(\gamma)),
\ee
since $\omega_{\phi}(\zeta)$ is assumed to be standard for standard $\zeta^{1}$.  Furthermore, we may assume the number $N^{0}$ as in \eqref{krif13} is minimal (though this number depends on the choice of the code for $\phi$).   
Clearly, \eqref{krif13} implies that $\phi$ is also \emph{nonstandard} pointwise continuous, i.e.\ 
\[
(\forall^{\st} \zeta^{1})(\forall  \gamma^{1})({\zeta}\approx_{1}{\gamma}\di \phi(\zeta)=\phi(\gamma)),
\]
which is the `nonstandard enrichment' we mentioned previously.  
Thus, a standard and continuous $\phi$ on Baire space \emph{represented by an associate}, is automatically nonstandard continuous.      
We now prove the `converse' in the following theorem.
\begin{thm}\label{krof2}
In $\RCAO$, a functional $\Phi^{2}$ which is nonstandard continuous on Baire space, has a standard associate relative to `\st'.  
Furthermore, if $\RCAO$ proves that $\Phi^{2}$ is nonstandard continuous on Baire space, a term $t^{1}$ can be extracted from this proof such that $\RCAo$ proves that $t^{1}$ is an associate of $\Phi^{2}$.     
\end{thm}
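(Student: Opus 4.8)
The plan is to split the argument along the two sentences of the statement. For the first (internal-to-$\RCAO$) part, suppose $\Phi^{2}$ is nonstandard continuous on Baire space, i.e.\ $(\forall^{\st}\alpha^{1})(\forall\beta^{1})(\alpha\approx_{1}\beta\di\Phi(\alpha)=_{0}\Phi(\beta))$. The first step is to convert this into an $\varepsilon$-$\delta$ statement relative to `st': for each standard $\alpha$ I want a standard $N$ with $(\forall\beta)(\overline{\alpha}N=\overline{\beta}N\di\Phi(\alpha)=\Phi(\beta))$. This is where the nontrivial work lies, and it is essentially an application of Idealisation (\textsf{I}). Indeed, fix standard $\alpha$ and consider the internal formula $\psi(n,\beta)\equiv[\overline{\alpha}n=\overline{\beta}n\wedge\Phi(\alpha)\neq\Phi(\beta)]$; nonstandard continuity says no single $\beta$ satisfies $\psi(n,\beta)$ for all standard $n$, which by the contrapositive of \textsf{I} (applied to the finite-sequence version) yields a \emph{standard finite bound}, hence a standard $N$ that works. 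Taking the least such $N=:g(\overline{\alpha}M)$ for a suitable standard modulus gives a standard neighbourhood function; the usual construction of an associate from a modulus of continuity (as in \cite{kohlenbach4}*{Prop.\ 4.4}, and essentially the converse of the passage \eqref{krif12}--\eqref{krif13} discussed above) then produces a standard $\alpha_{\Phi}^{1}$ satisfying Definition~\ref{defke}(i)--(ii) relative to `st'. Care must be taken that items (i) and (ii) of Definition~\ref{defke} hold only relative to `st', which is all that is claimed.

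For the second sentence, the strategy is to feed the first part through the term-extraction machinery of Corollary~\ref{consresultcor2}. The statement ``$\Phi^{2}$ is nonstandard continuous on Baire space, hence has a standard associate relative to `st'\,'' has, after the reductions above, the form of an implication between formulas of the shape $(\forall^{\st}\underline{x})(\exists^{\st}\underline{y})\varphi$ with $\varphi$ internal; more precisely, the conclusion ``$\alpha_{\Phi}$ is an associate relative to `st'\,'' unwinds, using $\HAC_{\INT}$ to collect the witnesses $N$ uniformly, to $(\forall^{\st}\alpha)(\exists^{\st}N)\psi(\alpha,N,\Phi)$ with $\psi$ internal and quantifier-free-ish. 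So if $\RCAO$ proves $\Phi$ is nonstandard continuous, it proves this $\forall^{\st}\exists^{\st}$ consequence, and Corollary~\ref{consresultcor} (valid by Corollary~\ref{consresultcor2} for $\RCAO$) extracts a closed term $t$ of $\mathcal{T}^{*}$ with $\textsf{E-PRA}^{\omega*}\vdash(\forall\alpha)(\exists N\in t(\alpha))\psi(\alpha,N,\Phi)$. From the finite sequence of candidate values $t(\alpha)$ one reads off, internally and primitively recursively, the actual associate: scan the finitely many entries, keep the first that genuinely works, and package the result as a neighbourhood function. This defines the desired term $t^{1}$, and the verification that $t^{1}$ is an associate of $\Phi$ in the sense of Definition~\ref{defke} is then provable in $\RCAo$ (indeed already in $\textsf{E-PRA}^{\omega*}$ plus the hypothesis on $\Phi$), since it is a purely internal consequence of \eqref{consequalty}.

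The main obstacle I anticipate is the \textsf{I}-application in the first step: one must set up the internal matrix $\psi$ so that the hypothesis of nonstandard continuity is exactly the \emph{negation} of the conclusion of Idealisation (over the finite-sequence type $\sigma^{*}$ with $\sigma=0$), and then argue that the resulting $y$—which a priori is merely a bound, not a modulus value—can be upgraded to a genuine standard $N$ depending only on a standard initial segment of $\alpha$. The subtlety is that $\Phi$ itself is an arbitrary (possibly nonstandard) parameter, so one cannot invoke Transfer; everything must go through \textsf{I} and $\HAC_{\INT}$, both of which are available in $\RCAO$ and both of which permit only \emph{finite} sequences of witnesses, which is precisely why the extracted object is a neighbourhood function (a finite-branching search) rather than a single $\delta$. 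A secondary, purely bookkeeping obstacle is checking that the neighbourhood-function normalisation of Definition~\ref{defke}—monotonicity/consistency of the associate—can be imposed term-by-term without destroying primitive recursiveness; this is routine but must be done explicitly to land in $\RCAo$.
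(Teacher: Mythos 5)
Your proposal follows essentially the same route as the paper's proof: idealisation \textsf{I} converts nonstandard continuity into a standard modulus value for each standard input, $\HAC_{\INT}$ uniformises these into a standard modulus functional, the associate relative to `st' is then built from this modulus and $\Phi$ exactly as in the proof of \cite{kohlenbach4}*{Prop.\ 4.4}, and the second claim is obtained by pushing the resulting $(\forall^{\st})(\exists^{\st})$-statement through the term extraction of Corollaries \ref{consresultcor} and \ref{consresultcor2}. One small correction: neither ``the least such $N$'' nor ``the first candidate that genuinely works'' can be selected effectively, since verifying a candidate requires deciding a universally quantified statement over Baire space; instead one simply takes the \emph{maximum} of the finite list of candidates supplied by $\HAC_{\INT}$ (resp.\ by the extracted term), which is again a modulus value because the modulus property is monotone in $N$ --- and this is precisely what the paper does.
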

\begin{proof}
Working in $\RCAO$, nonstandard continuity \eqref{contje2} implies by definition that:
\[
(\forall^{\st} \alpha^{1})(\forall \beta^{1})(\exists^{\st}N^{0})(\overline{\alpha}N=_{0}\overline{\beta}N \di \Phi(\alpha)=_{0}\Phi(\beta)).  
\]
Applying the idealization axiom \textsf{I} for fixed standard $\alpha^{1}$, we obtain 
\be\label{dorkiiii}
(\forall^{\st} \alpha^{1})(\exists^{\st}K^{0})(\forall \beta^{1})(\exists N^{0}\leq_{0}K)(\overline{\alpha}N=_{0}\overline{\beta}N \di \Phi(\alpha)=_{0}\Phi(\beta)).  
\ee
We may remove the bounded quantifier as follows:
\be\label{lolo}
(\forall^{\st} \alpha^{1})(\exists^{\st}K^{0})(\forall \beta^{1})(\overline{\alpha}K=_{0}\overline{\beta}K \di \Phi(\alpha)=_{0}\Phi(\beta)),  
\ee
and apply \textsf{HAC}$_{\textup{\textsf{int}}}$ to \eqref{lolo} obtain a standard functional $\Xi^{1\di 0^{*}}$ such that 
\be\label{kroef}
(\forall^{\st} \alpha^{1})(\exists K^{0}\in \Xi(\alpha))(\forall \beta^{1})(\overline{\alpha}K=_{0}\overline{\beta}K \di \Phi(\alpha)=_{0}\Phi(\beta)).  
\ee
Now define $\Psi(\alpha)$ as the maximum of all $\Xi(\alpha)(i)$ for $i<|\Xi(\alpha)|$.  Then $\Psi^{2}$ is a (standard) modulus of pointwise continuity for $\Phi$, as follows:  
\[
(\forall^{\st} \alpha^{1}, \beta^{1})(\overline{\alpha}\Psi(\alpha)=_{0}\overline{\beta}\Psi(\alpha) \di \Phi(\alpha)=_{0}\Phi(\beta)).  
\]
As in the proof of \cite{kohlenbach4}*{Prop.~4.4}, $\Phi$ now also has an associate $\alpha^{1}$ relative to `st', defined in terms of $\Psi$ and $\Phi$ as follows:
\be\label{frinkkkkkk}
\alpha(\sigma^{0}):=
\begin{cases}
\Phi(\sigma*00\dots)+1 & \Psi(\sigma*00\dots)\leq |\sigma| \\
0 & \text{otherwise}
\end{cases}.
\ee 
Finally, if $\RCAO$ proves \eqref{contje2}, it also proves \eqref{dorkiiii};  Now apply Corollary \ref{consresultcor} to the latter and go through the previous steps to obtain \eqref{frinkkkkkk}.   
\end{proof}
We now speculate why nonstandard and RM-continuity are connected as above.  
\begin{rem}\rm\label{dorkllll}
The correspondence between `continuity-via-an-associate' and nonstandard continuity established above, can be explained as follows:  
Intuitively speaking, both definitions of continuity remove the innermost universal quantifier (involving $\beta^{1}$) in \eqref{contje};  Indeed, 
this reduction in quantifier complexity is literally part of the definition of associate (See item \eqref{defitem2} in Definition \ref{defke}), while nonstandard continuity gives rise to \eqref{lolo}, in which the innermost \emph{internal} universal quantifier (involving $\beta^{1}$) `does not count' from the 
point of view of \HAC$_{\textup{\INT}}$, as the latter applies to \emph{all} internal formulas.  In both cases, the (literal or not) removal of this innermost universal quantifier allows us to obtain a modulus of continuity.       
\end{rem}

\subsection{Nonstandard and higher-order enrichment}\label{thmbase}
In the previous section, we showed that the representation of continuous functions by RM-codes gives rise to nonstandard continuity and vice versa.  
Thus, the following statement is implicit in the RM-definition of continuity in second-order RM:  
\be\label{dorf}
\text{\small\emph{All continuous and standard functions on Baire space are nonstandard cont}.}
\ee
In this section, we show that \eqref{dorf} formulated in the higher-type framework, is equivalent to the existence of a \emph{modulus-of-continuity functional}.  
Since $\RCAo$ cannot prove the existence of a such a functional (See \cite{kohlenbach4}*{Prop.\ 4.4 and 4.6} or \cite{troelstra1}*{\S6, Theorem 2.6.7}),  
\eqref{dorf} gives rise to a \emph{strict higher-order enrichment} of the usual definition of continuity \eqref{contje}.  
In other words, due to the RM-definition of continuity, {higher-order statements are implicit in second-order {RM}}.  

\medskip

To establish the previous claims, consider the following statements:  
\be\label{NC}\tag{\textsf{\textup{NC}}}
(\forall^{\st}\Phi^{2}\in C,\alpha^{1})(\forall \beta^{1})({\alpha}\approx_{1}{\beta} \di \Phi(\alpha)=_{0}\Phi(\beta)).  
\ee
\be\label{MC}\tag{\textsf{\textup{MC}}}
(\exists^{\st} \Psi^{3})(\forall^{\st} \Phi^{2}\in C, \alpha^{1})(\forall \beta^{1})(\overline{\alpha}\Psi(\Phi, \alpha)=_{0}\overline{\beta}\Psi(\Phi, \alpha) \di \Phi(\alpha)=_{0}\Phi(\beta)).  
\ee
Clearly, \ref{NC} is \eqref{dorf} in the higher-type framework and \ref{MC} states the existence of a modulus-of-continuity functional. 

\begin{thm}\label{krof}
In $\RCAO$, we have $ \ref{NC}\asa \ref{MC}$.  
\end{thm}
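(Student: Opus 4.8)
The plan is to prove the two directions separately, in each case reducing the external statement to an internal one via the idealisation axiom \textsf{I} and the Herbrandised choice axiom \HAC$_{\INT}$, exactly in the style of the proof of Theorem~\ref{krof2}. For the direction $\ref{MC}\di\ref{NC}$, assume we have a standard $\Psi^{3}$ as in \ref{MC}. Fix standard $\Phi^{2}\in C$ and standard $\alpha^{1}$. Since $\Psi$ is standard and $\Phi,\alpha$ are standard, the value $\Psi(\Phi,\alpha)$ is standard by the basic axiom of $\T_{\st}$ that standard functionals map standard inputs to standard outputs. Now if $\alpha\approx_{1}\beta$, then by definition $\overline{\alpha}n=\overline{\beta}n$ for all standard $n$, and in particular for $n=\Psi(\Phi,\alpha)$; hence $\Phi(\alpha)=_{0}\Phi(\beta)$ by \ref{MC}. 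This gives \ref{NC}. This direction is essentially immediate and requires only the standardness axiom.

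For the converse $\ref{NC}\di\ref{MC}$, the argument mirrors the proof of Theorem~\ref{krof2} but now carried out uniformly in $\Phi$. Assume \ref{NC}. Unpacking $\approx_{1}$ and noting that the implication $\overline{\alpha}N=_{0}\overline{\beta}N\di\Phi(\alpha)=_{0}\Phi(\beta)$ depends only on $\overline{\alpha}N$, one obtains
\[
(\forall^{\st}\Phi^{2}\in C,\alpha^{1})(\forall\beta^{1})(\exists^{\st}N^{0})(\overline{\alpha}N=_{0}\overline{\beta}N\di\Phi(\alpha)=_{0}\Phi(\beta)).
\]
Applying idealisation \textsf{I} (for the internal matrix, with the pair $(\Phi,\alpha)$ held standard) yields a standard bound $K$, and then, as in the passage from \eqref{dorkiiii} to \eqref{lolo}, one can absorb the bounded existential quantifier to get
\[
(\forall^{\st}\Phi^{2}\in C,\alpha^{1})(\exists^{\st}K^{0})(\forall\beta^{1})(\overline{\alpha}K=_{0}\overline{\beta}K\di\Phi(\alpha)=_{0}\Phi(\beta)).
\]
Now apply \HAC$_{\INT}$ to the standard pair $(\Phi,\alpha)$: this produces a standard functional $\Xi$ (of type $(2\times 1)\di 0^{*}$, i.e.\ a type-three object) returning a finite sequence of candidate bounds, and taking $\Psi(\Phi,\alpha)$ to be the maximum of the entries of $\Xi(\Phi,\alpha)$ gives a standard $\Psi^{3}$ with
\[
(\forall^{\st}\Phi^{2}\in C,\alpha^{1})(\forall\beta^{1})(\overline{\alpha}\Psi(\Phi,\alpha)=_{0}\overline{\beta}\Psi(\Phi,\alpha)\di\Phi(\alpha)=_{0}\Phi(\beta)),
\]
which is exactly \ref{MC}.

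The main obstacle, and the point deserving care, is the application of \HAC$_{\INT}$ with $\Phi^{2}$ as a parameter: one must check that the formula being Herbrandised is internal (it is, once $\approx_{1}$ has been eliminated in favour of the standard quantifier $(\exists^{\st}K)$ and the remaining $\beta$-quantifier is internal) and that the resulting witnessing functional $\Xi$ has the correct higher type, so that $\Psi$ genuinely lands in type $3$ — this is the technical heart of the claim that a higher-order object is being extracted. A secondary subtlety is that \ref{NC} and \ref{MC} restrict attention to $\Phi\in C$, so one should note that the constructions above never leave the class of continuous functionals and that the codes/associates invoked implicitly (via $\Phi\in C$) cause no circularity. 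Once these points are handled, the equivalence follows by combining the two implications, and — as in Theorem~\ref{krof2} — one can moreover extract the term-level version using Corollary~\ref{consresultcor} should an effective statement be desired.
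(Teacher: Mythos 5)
Your proposal is correct and follows essentially the same route as the paper: the direction $\ref{MC}\di\ref{NC}$ by the standardness-of-values axiom, and $\ref{NC}\di\ref{MC}$ by unpacking $\approx_{1}$, applying idealisation \textsf{I} for standard $(\Phi,\alpha)$ to get \eqref{dosey}, then $\HAC_{\INT}$ and taking the maximum of the finite sequence to define $\Psi^{3}$, exactly as in the paper's proof (which itself refers back to the proof of Theorem~\ref{krof2} for the idealisation step).
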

\begin{proof}
As standard objects are standard for standard input, the reverse implication follows easily.  
For the forward implication, assume the latter principle and obtain, as in the proof of Theorem~\ref{krof2}, that \eqref{lolo} holds for standard and continuous $\Phi^{2}$, i.e.
\be\label{dosey}
(\forall^{\st} \Phi^{2}\in C, \alpha^{1})(\exists^{\st}K^{0})(\forall \beta^{1})(\overline{\alpha}K=_{0}\overline{\beta}K \di \Phi(\alpha)=_{0}\Phi(\beta)).
\ee
Now apply \HAC$_{\textup{\INT}}$ to obtain a standard functional $\Xi^{(2\times 1)\di 0^{*}}$ such that 
\be\label{kroef5}
(\forall^{\st}\Phi^{2}\in C, \alpha^{1})(\exists K^{0}\in \Xi(\Phi, \alpha))(\forall \beta^{1})(\overline{\alpha}K=_{0}\overline{\beta}K \di \Phi(\alpha)=_{0}\Phi(\beta)).  \notag
\ee
Next, define $\Psi(\Phi, \alpha)$ as the maximum of all $\Xi(\Phi, \alpha)(i)$ for $i<|\Xi(\Phi, \alpha)|$.  Then $\Psi^{3}$ is a standard modulus-of-continuity functional as in 
\be\label{kroef6}
(\forall^{\st}\Phi^{2}\in C, \alpha^{1}, \beta^{1})(\overline{\alpha}\Psi(\Phi, \alpha)=_{0}\overline{\beta}\Psi(\Phi, \alpha) \di \Phi(\alpha)=_{0}\Phi(\beta)),
\ee
and the previous formula is exactly \ref{MC}.  
\end{proof}
While the previous theorem provides a higher-order statement implicit in \eqref{dorf}, we would nonetheless like to obtain an equivalence with an internal principle in the previous theorem.  
We now present a way of obtaining such an equivalence.    
\begin{rem}[Internal principles]\rm
First of all, it is shown in \cite{bennosam} that the Transfer principle \emph{limited to formulas without parameters}, denoted \textsf{PF-TP}$_{\forall}$, gives 
rise to a conservative extension of e.g.\ $\RCAO$.  In this way, the functional $(\exists^{2})$ may be assumed to be \emph{standard} if it exists, as its definition is given by a formula without parameters:
\be\tag{$\exists^{2}$}
(\exists \varphi_{0}^{2})(\forall f^{1})\big[ \varphi_{0}(f)=0\asa (\exists x^{0})f(x)=0 \big].
\ee
Thus, $\RCAO+\textsf{PF-TP}_{\forall}$ can be (again conservatively) extended with a new symbol $\varphi_{0}$ and axioms stating that the latter is standard and the (essentially) unique functional as in $(\exists^{2})^{\st}$, if such there is.  
However, this second extension guarantees that $(\exists^{2})^{\st}\di (\exists^{2})$ as $\varphi_{0}$ is no longer a parameter but a symbol from the language.           
The same can be done for any functional which is unique (enough) by definition.  

\medskip

Secondly, if $\phi$ is a function on Cantor space represented by an associate $\alpha^{1}$, with a modulus of continuity $\omega_{\phi}$ as in \eqref{krif12} and \eqref{kruks2}, we may assume that the modulus 
outputs the \emph{least} point of continuity for standard inputs (even in $\RCAO$).  This becomes clear by considering $(\forall^{\st}\gamma^{1})(\exists^{\st}N^{0})\alpha(\overline{\gamma}N)>0$ (a consequence of \eqref{kruks2}) rather than \eqref{krif13}.  
Indeed, the latter allows us to compute the least such $N$, which is -prima facia- not the case for \eqref{krif13} due to the extra $(\forall\beta^{1})$-quantifier.  

\medskip

In other words, the RM-definition of continuity not just constitutes the existence of a modulus of continuity, this modulus also outputs the minimal point of continuity (of course dependent on the choice of the associate representing $\phi$).  
Hence, to reflect the previous observation concerning second-order RM, we may assume a principle $P$ which (relative to `\st') states that a modulus of continuity gives rise to a modulus outputting the \emph{minimal} point of continuity.  

\medskip

Thus, the functional $\Psi$ from \eqref{kroef6} may be assumed to output moduli which yield the minimal point of continuity (assuming $P$).  
Such a functional $\Psi$ is unique and in the same way as discussed at the beginning of this remark, \ref{MC} implies \ref{MC} with all `\st' dropped if \textsf{PF-TP}$_{\forall}$ is given.  
\end{rem}
We also discuss further results similar to Theorem \ref{krof}.
\begin{rem}[Further results]\label{feralll}\rm
One can obtain results similar to Theorem \ref{krof} for the RM of $\WKL_{0}$ (See \cite{simpson2}*{IV}) by considering e.g.\ Heine's theorem.  Due to the RM-definition of continuity, the latter 
implies that all continuous functions on Cantor space are nonstandard \emph{uniform} continuity.  
Similar to Theorem \ref{krof}, the latter nonstandard statement gives rise to a \emph{modulus-of-uniform-continuity} functional, also called \emph{fan functional} (See e.g.\ \cites{kohlenbach2, noortje}).  

\medskip

Another example \emph{not involving continuity} is the Heine-Borel lemma (\cite{simpson2}*{IV.1}), which is the statement that for all sequences of reals $c_{n}, d_{n}$
\be\tag{\textsf{\textup{HB}}}\label{HB}
 (\forall x\in [0,1])(\exists n^{0})(x\in (c_{n}, d_{n}))\di (\exists k^{0})(\forall  x\in [0,1])(\exists n \leq k)(x\in (c_{n}, d_{n})).
\ee
With some effort, one establishes that \ref{HB} implies that a standard open cover of the unit interval has a finite sub-cover which covers \emph{all} of the unit interval, not just the standard numbers.  
The latter nonstandard statement gives rise to a functional witnessing the Heine-Borel lemma. Now, the definition of open set in RM (See \cite{simpson2}*{II.5.6}) guarantees 
that elementhood of an open set is a $\Sigma^{0}_{1}$-formula.  This reduction in quantifier complexity (compared to the usual definition) is the reason we can obtain the aforementioned nonstandard and functional version of the Heine-Borel lemma.     
\end{rem}
In conclusion, we have established that higher-order statements are implicit in second-order RM as a direct consequence of the RM-definition of continuity.  
We obtain more explicit results in the following section, inspired by the implicit results in this section.  
%

\section{Higher-order statements `explicit' in second-order RM}\label{sexpl}
In the previous section, we discussed how nonstandard continuity was implicit in the RM-definition of continuity, and showed that this `nonstandard enrichment' guarantees that 
higher-order statements are implicit in second-order theorems concerning continuity.  In this section, we take a more direct approach and show that the following second-order statement:
\be\label{frlurk}
\text{\emph{Every \textup{RM}-continuous function on Cantor space is uniformly continuous.}}
\ee
is equivalent to the higher-order statement \ref{URC} below, inside $\RCAo$.   This equivalence is only possible because of the use of RM-continuity in \eqref{frlurk}, which greatly reduces the quantifier-complexity (just like nonstandard continuity; See Remark~\ref{dorkllll}).  
Hence, higher-order statements are not \emph{merely implicit} in second-order ones involving continuity, but the latter can be \emph{derived explicitly} from the former.   

\medskip

The following continuity statement is \eqref{frlurk}, again noting that continuity via an RM-code or an associate is equivalent by \cite{kohlenbach4}*{Prop.\ 4.4}.  
\be\label{RC}\tag{\textsf{\textup{RC}}}
(\forall \alpha^{1})\big[(\forall \beta\leq_{1}1)(\exists N^{0})\alpha(\overline{\beta}N)>0 \di (\exists k^{0})(\forall \beta \leq_{1}1)(\exists N\leq k)\alpha(\overline{\beta}N)>0   \big].
\ee
In other words, \ref{RC} is just \cite{simpson2}*{IV.2.2} for Cantor space.  Now consider the following uniform version of \ref{RC}:    
\begin{align}\label{URC}\tag{\textsf{\textup{URC}}}
(\exists \Psi^{3})(\forall \alpha^{1},g^{2})\big[(\forall \beta\leq_{1}1)\alpha(\overline{\beta}&g(\beta))>0 \\
&\di (\forall \beta \leq_{1}1)(\exists N\leq \Psi(g,\alpha))\alpha(\overline{\beta}N)>0   \big].\notag
\end{align}
Note that \ref{URC} is quite natural from the RM point of view, as $g$ plays the role of the modulus-of-continuity which every function represented by an RM code has (See again \cite{kohlenbach4}*{Prop.\ 4.4}).   
We have the following theorem, the meaning of which is discussed below.  
Hunter notes in \cite{hunterphd}*{\S2.1.2} that any choice axiom $\QFAC^{\sigma, 0}$ still results in a conservative extension of $\RCA_{0}^{\omega}$.
Similarly, \textsf{QF-AC} is a weak axiom by \cite{yamayamaharehare}*{Theorem 2.2}.     
\begin{thm}\label{dorkkk}
In $\RCAO$, we have $\ref{URC}^{\st}\asa \ref{RC}^{\st}\asa \WKL^{\st}$.\\
In $\RCAo+\QFAC^{2,1}$, we have $\ref{URC}\asa \ref{RC}\asa \WKL$.
\end{thm}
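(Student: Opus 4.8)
The plan is to prove the two displayed chains of equivalences separately, exploiting the fact that the nonstandard version (first line) and the internal version (second line) mirror each other, the former being obtained from the latter by the transfer-and-term-extraction machinery of Theorem~\ref{consresult} and Corollary~\ref{consresultcor}. For the second line, working in $\RCAo+\QFAC^{2,1}$, I would establish the cycle $\ref{URC}\di\ref{RC}\di\WKL\di\ref{URC}$.

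\emph{The implication $\ref{URC}\di\ref{RC}$ (and $\ref{RC}\di\WKL$).} Given $\alpha^{1}$ as in the antecedent of \ref{RC}, the hypothesis $(\forall\beta\leq_{1}1)(\exists N)\alpha(\overline{\beta}N)>0$ together with $\QFAC^{1,0}$ (available in the base theory) yields a function $g^{2}$ with $(\forall\beta\leq_{1}1)\alpha(\overline{\beta}g(\beta))>0$; feeding this $g$ and $\alpha$ into \ref{URC} produces the uniform bound $k:=\Psi(g,\alpha)$ required by \ref{RC}. The implication $\ref{RC}\di\WKL$ is exactly the standard RM-equivalence of \cite{simpson2}*{IV.2.2} (the Heine--Borel property for Cantor space is equivalent to weak K\"onig's lemma over $\RCA_{0}$, hence over $\RCAo$), so this step is quoted rather than reproved; and $\WKL\di\ref{RC}$ is the forward direction of the same classical equivalence.

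\emph{The implication $\WKL\di\ref{URC}$.} This is the one genuinely new step and the main obstacle: one must produce, \emph{uniformly} in $g^{2}$ and $\alpha^{1}$, a single number $\Psi(g,\alpha)$ bounding the point of decision for \emph{all} $\beta\leq_{1}1$. Here is where $\QFAC^{2,1}$ is used. From the antecedent of \ref{URC} for a given $g,\alpha$ one has, via $\WKL$ applied to the tree of binary sequences $\sigma$ with $\alpha(\overline{\sigma}i)=0$ for all $i\leq|\sigma|$ (which is well-founded by the hypothesis and hence, by $\WKL$ contrapositively / K\"onig's lemma, finitely bounded), a bound $k_{g,\alpha}$ with $(\forall\beta\leq_{1}1)(\exists N\leq k_{g,\alpha})\alpha(\overline{\beta}N)>0$. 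This gives $(\forall g^{2},\alpha^{1})[\,\text{antecedent}\di(\exists k^{0})(\dots)\,]$, a statement of the form $(\forall g^{2},\alpha^{1})(\exists k^{0})\,\theta$ with $\theta$ arithmetical; combining the pair $(g,\alpha)$ into a single type-$2$ object and applying $\QFAC^{2,1}$ (strictly: $\QFAC^{2,0}$, a consequence) extracts the functional $\Psi^{3}$ witnessing \ref{URC}. The delicate point to check is that the well-foundedness of the relevant binary tree is genuinely available from the antecedent of \ref{URC} inside $\RCAo$, i.e.\ that no König-type compactness is being smuggled in before $\WKL$ is applied; this is fine because the antecedent only asserts $(\forall\beta\leq_{1}1)\alpha(\overline{\beta}g(\beta))>0$, which directly bounds each branch and makes the tree finite after removing the $g$-controlled initial segments.

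\emph{The first line (nonstandard version).} For $\ref{RC}^{\st}\asa\WKL^{\st}$, relativise the second-line argument by prefixing `st' to all quantifiers; the classical RM-proof of \cite{simpson2}*{IV.2.2} relativises verbatim since it is finitary/arithmetical in character, and the uses of $\QFAC^{1,0}$ become uses of its standard instances, which are available in $\RCAO$. For $\ref{URC}^{\st}\asa\ref{RC}^{\st}$: the direction $\ref{URC}^{\st}\di\ref{RC}^{\st}$ is immediate as on the second line (using standard $\QFAC$), and for $\ref{RC}^{\st}\di\ref{URC}^{\st}$ one reruns the $\WKL\di\ref{URC}$ argument relative to `st', replacing the external application of $\QFAC^{2,1}$ by \textsf{HAC}$_{\textup{int}}$: from $(\forall^{\st}g,\alpha)(\exists^{\st}k)\theta$ one gets a standard $\Xi$ providing a finite set of candidate bounds for each $(g,\alpha)$, and taking the maximum over $\Xi(g,\alpha)$ yields the standard functional $\Psi^{3}$, exactly as in the proof of Theorem~\ref{krof}. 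I expect the only bookkeeping subtlety here to be confirming that the $\WKL$-step internal to this argument can itself be run relative to `st' (i.e.\ that $\WKL^{\st}$ suffices), which it does since the tree in question is standard whenever $g,\alpha$ are.
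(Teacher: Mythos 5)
There is a genuine gap at the one step you yourself identify as the crux, namely $\WKL\di\ref{URC}$ in the second line. You arrive at a statement of the form $(\forall g^{2},\alpha^{1})(\exists k^{0})\,\theta$ and propose to apply $\QFAC^{2,1}$ (or $\QFAC^{2,0}$) ``with $\theta$ arithmetical''. But $\QFAC$ is \emph{quantifier-free} choice: it only applies when the matrix is quantifier-free, and your $\theta$ is not even close -- it contains the type-1 universal quantifier $(\forall \beta\leq_{1}1)\alpha(\overline{\beta}g(\beta))>0$ in the (negated) antecedent and another type-1 universal quantifier $(\forall\beta\leq_{1}1)(\exists N\leq k)\alpha(\overline{\beta}N)>0$ in the consequent. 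The paper's proof works only because it removes both obstructions explicitly: by classical logic the antecedent's $(\forall\beta\leq_{1}1)$ is pulled out as an extra existentially quantified witness $\beta$ alongside $k$ (formula \eqref{short}), and the consequent is weakened so that the quantifier ranges over \emph{finite} binary sequences of length $k$ (formula \eqref{fromat}), at which point the matrix is quantifier-free (bounded quantifiers only) and $\QFAC^{2,1}$ can legitimately produce a functional whose $\beta$-component is then discarded. This reduction is exactly the point the paper stresses as essential, and it is also why no such equivalence is claimed for the non-associate version \eqref{trut}: there the quantifier complexity cannot be lowered, so the shortcut ``apply choice to an arithmetical matrix'' is precisely the move that fails. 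As written, your argument would prove too much, since it never uses the associate format of \ref{RC}/\ref{URC} at the choice step.

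A secondary, repairable, version of the same issue occurs in your first-line argument: before $\HAC_{\textup{\INT}}$ can be applied, the matrix after $(\exists^{\st}k)$ must be \emph{internal}, whereas the st-relativised antecedent $(\forall^{\st}\beta\leq_{1}1)\alpha(\overline{\beta}g(\beta))>0$ and consequent $(\forall^{\st}\beta\leq_{1}1)(\exists N\leq k)(\dots)$ are external. The paper handles this by first extending the consequent from standard $\beta$ to all $\gamma\leq_{1}1$ (using that $\overline{\gamma}N$ is standard for standard $N$) and then prenexing the antecedent's quantifier to obtain \eqref{frugal}, whose matrix is internal; since $\HAC_{\textup{\INT}}$ applies to \emph{all} internal formulas, no further quantifier reduction is needed there, which is why your sketch for the nonstandard line is essentially salvageable while the internal line is not without the \eqref{fromat}-style weakening. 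The remaining parts of your proposal ($\ref{URC}\di\ref{RC}$ via $\QFAC^{1,0}$, $\ref{RC}\asa\WKL$ quoted from Simpson, and the relativisation of these to `st') agree with the paper.
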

It is important to note that the following proof only works because in \ref{RC} and \ref{URC}, continuity in the form of an associate (as opposed to \eqref{contje}) is used, greatly reducing overall quantifier-complexity.    
Indeed, this reduction is essential for obtaining \eqref{fromat} (resp.\ \eqref{frugal}), to which $\QFAC^{2,1}$ (resp.\ \HAC$_{\textup{\INT}}$) can be applied.  We now prove Theorem \ref{dorkkk}.    
\begin{proof}
The equivalence $\ref{RC}\asa\WKL$ is straighforward; The same proof goes through for the equivalence relative to `st'.   
Furthermore, since $\QFAC^{1,0}$ is part of $\RCAo$, $\ref{URC}\di \ref{RC}$ is immediate.  The same proof goes through for the implication relative to `st', as \HAC$_{\textup{\INT}}$ implies $\QFAC^{1,0}$ relative to `st'.  
We now prove the remaining implication in the first line of the proof.  

\medskip

Hence, assume \ref{RC}$^{\st}$ and note that we have:
\[
(\forall^{\st} \alpha^{1}, g^{2})\big[(\forall^{\st} \beta\leq_{1}1)\alpha(\overline{\beta}g(\beta))>0 \di (\exists^{\st} k^{0})(\forall^{\st} \beta \leq_{1}1)(\exists N\leq k)\alpha(\overline{\beta}N)>0   \big],
\]
as $g(\beta)$ is standard for standard $\beta^{1}$.  Trivially, we also have 
\[
(\forall^{\st} \alpha^{1}, g^{2})\big[(\forall^{\st} \beta\leq_{1}1)\alpha(\overline{\beta}g(\beta))>0 \di (\exists^{\st} k^{0})(\forall \gamma \leq_{1}1)(\exists N\leq k)\alpha(\overline{\gamma}N)>0   \big],
\]
as $\overline{\gamma}N$ is standard for standard $N^{0}$.  Bringing quantifiers to the front, we obtain
\be\label{frugal}
(\forall^{\st} \alpha^{1}, g^{2})(\exists^{\st}k^{0}, \beta^{1}\leq_{1}1)\big[\alpha(\overline{\beta}g(\beta))>0 \di (\forall \gamma \leq_{1}1)(\exists N\leq k)\alpha(\overline{\gamma}N)>0   \big].
\ee
Since the formula in square brackets in \eqref{frugal} is internal, we may apply \HAC$_{\textup{\INT}}$.  Hence, there is standard $\Xi^{(1\times 2)\di (0^{*}\times 1^{*})}$ such that:
\begin{align}
(\forall^{\st} \alpha^{1}, g^{2})(\exists k^{0}, \beta^{1}\in \Xi(\alpha, g)) \big[\beta\leq_{1}1&\wedge \alpha(\overline{\beta}g(\beta))>0 \label{lds}\\
&\di (\forall \gamma \leq_{1}1)(\exists N\leq k)\alpha(\overline{\gamma}N)>0   \big].\notag
\end{align}
Now define $\Psi(\alpha, g)$ as the maximum of $\Xi(\alpha, g)(1)(i)$ for $i<|\Xi(\alpha, g)(1)|$.  Note that $\Psi$ completely ignores the second component of $\Xi$ (which contains a witness for $\beta^{1}$).    
Hence, since $\Xi(\alpha, g)$ is standard for standard $\alpha^{1}, g^{2}$, the formula \eqref{lds} becomes
\begin{align*}
(\forall^{\st} \alpha^{1}, g^{2})(\exists^{\st} \beta^{1}) \big[\beta\leq_{1}1&\wedge \alpha(\overline{\beta}g(\beta))>0 \\
&\di (\forall \gamma \leq_{1}1)(\exists N\leq \Psi(\alpha, g))\alpha(\overline{\gamma}N)>0   \big].
\end{align*}
The previous formula implies \ref{URC}$^{\st}$, and the first line of the theorem is done.  

\medskip

Finally, we prove the remaining implication in the second line of the theorem.    
We proceed in roughly the same way as in the first paragraph of this proof, but with extra tricks to remove quantifiers prohibiting the use of $\QFAC^{2,1}$ in the internal version of \eqref{frugal}.  
Thus, consider \ref{RC} and obtain the internal version of \eqref{frugal}, i.e.\
\be\label{short}
(\forall \alpha^{1}, g^{2})(\exists k^{0}, \beta^{1}\leq_{1}1)\big[\alpha(\overline{\beta}\tilde{g}(\beta))>0 \di (\forall \gamma \leq_{1}1)(\exists N\leq k)\alpha(\overline{\gamma}N)>0   \big], 
\ee
where $\tilde{g}(\alpha)$ is the least $n\leq g(\alpha)$ such that $\overline{\alpha}n\not\in T$, if such exists and zero otherwise.  
We now (trivially) weaken the consequent of \eqref{short} as follows:
\begin{align}
(\forall \alpha^{1}, g^{2})(\exists k^{0}, \beta^{1}\leq_{1}1)&\big[\alpha(\overline{\beta}\tilde{g}(\beta))>0\label{fromat}\\
& \di (\forall \gamma^{0} \leq_{0^{*}}1)(\exists N\leq k)[|\gamma|=k\di \alpha(\overline{\gamma}N)>0   ]\big].\notag
\end{align}
Hence, applying $\QFAC^{2,1}$ to \eqref{fromat}, we obtain $\Xi^{(1\times 2)\di (0\times 1)}$ witnessing $(k, \beta)$ in \eqref{fromat}.  Again ignoring the second component in $\Xi$ (involving $\sigma$), we obtain \ref{URC}.     
\end{proof}
The restriction to Cantor space in \ref{RC} and \ref{URC} is only for convenience:  In light of the equivalence between weak K\"onig's lemma and 
\emph{bounded K\"onig's lemma} (See \cite{simpson2}*{IV.1.4}), one establishes Corollary \ref{ocrina} in exactly the same way as the theorem.  
Furthermore, the latter corollary contains similar results for \cite{simpson2}*{I.10.3.3} in which a continuous function is bounded on a compact subspace of Baire space.  
\begin{cor}\label{ocrina}
In $\RCAo+\QFAC^{2,1}$, the following are equivalent to $\WKL$\textup{:}
\begin{align}\label{RC2}\tag{\textsf{\textup{RC2}}}
(\forall \alpha^{1}, \gamma^{1})\big[(\forall \beta\leq_{1}\gamma)&(\exists N^{0})\alpha(\overline{\beta}N)>0 \\
& \di (\exists k^{0})(\forall \beta \leq_{1}\gamma)(\exists N\leq k)\alpha(\overline{\beta}N)>0   \big].\notag
\end{align}
\begin{align}\label{URC2}\tag{\textsf{\textup{URC2}}}
(\exists \Psi^{3})(\forall \alpha^{1},\gamma^{1},g^{2})\big[(\forall &\beta\leq_{1}\gamma)\alpha(\overline{\beta}g(\beta))>0 \\
&\di (\forall \beta \leq_{1}\gamma)(\exists N\leq \Psi(g,\alpha, \gamma))\alpha(\overline{\beta}N)>0   \big].\notag
\end{align}
\begin{align}\label{RB}\tag{\textsf{\textup{RB}}}
(\forall \alpha^{1}, \gamma^{1})\big[(\forall \beta\leq_{1}\gamma)&(\exists N^{0})\alpha(\overline{\beta}N)>0 \\
& \di (\exists k^{0})(\forall \beta \leq_{1}\gamma, N^{0})(\alpha(\overline{\beta}N)>0  \di \alpha(\overline{\beta}N)\leq k \big].\notag
\end{align}
\begin{align}\label{URB}\tag{\textsf{\textup{URB}}}
(\exists \Psi^{3})(\forall \alpha^{1},\gamma^{1},g^{2})\big[(\forall &\beta\leq_{1}\gamma)\alpha(\overline{\beta}g(\beta))>0 \\
&\di(\forall \beta \leq_{1}\gamma, N^{0})(\alpha(\overline{\beta}N)>0  \di \alpha(\overline{\beta}N)\leq \Psi(\alpha, g) \big].\notag
\end{align}
\end{cor}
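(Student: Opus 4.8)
The plan is to prove the corollary by reducing everything to Theorem \ref{dorkkk} together with the equivalence between weak K\"onig's lemma and bounded K\"onig's lemma from \cite{simpson2}*{IV.1.4}. First I would observe that \ref{RC2} is literally \cite{simpson2}*{IV.2.2} for an arbitrary bounded subspace $\{\beta : \beta\leq_{1}\gamma\}$ of Baire space, i.e.\ it is the statement that a continuous function (via an associate $\alpha$) on such a compact space is bounded in the sense of having a uniform bound on the length of relevant neighbourhoods; and \ref{RB} is the companion statement that such a continuous function is \emph{bounded} in value, which is \cite{simpson2}*{I.10.3.3} in this setting. The equivalences $\ref{RC2}\asa\WKL$ and $\ref{RB}\asa\WKL$ are then obtained exactly as in Simpson, using bounded K\"onig's lemma to pass from the general bound $\gamma$ to the binary case; the implication $\WKL\di\ref{RC2}$ uses the standard compactness argument on the tree $\{\sigma : \sigma\leq_{1}\overline{\gamma}|\sigma| \wedge (\forall N\leq |\sigma|)\alpha(\overline{\sigma}N)=0\}$, and $\ref{RC2}\di\WKL$ comes from coding an infinite binary tree with no path into a suitable associate that fails to be uniformly bounded. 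The equivalence $\ref{RC2}\asa\ref{RB}$ can either be routed through $\WKL$ or proved directly: given a uniform bound $k$ on neighbourhood lengths, the value $\alpha(\overline{\beta}N)$ for the least witnessing $N$ is determined by a sequence of length $\leq k$, hence bounded by $\max_{|\sigma|\leq k}\alpha(\sigma)$, a finite maximum computable in $\RCAo$; conversely a value bound plus König's lemma recovers a length bound.

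Next I would handle the uniform versions \ref{URC2} and \ref{URB}. The point is that the extra parameters $\gamma^{1}$ (the bound) and $g^{2}$ (the candidate modulus of continuity) are harmless: the argument of Theorem \ref{dorkkk} for the second line carries over verbatim, with $g$ replaced by the truncated $\tilde g$ relative to the tree and with the quantifier $(\forall\beta\leq_{1}1)$ replaced throughout by $(\forall\beta\leq_{1}\gamma)$. Concretely, assuming \ref{RC2}, one first rewrites it in the ``modulus'' form $(\forall\alpha,\gamma,g)\big[(\forall\beta\leq_{1}\gamma)\alpha(\overline{\beta}g(\beta))>0\di(\exists k)(\forall\beta\leq_{1}\gamma)(\exists N\leq k)\alpha(\overline{\beta}N)>0\big]$, which is immediate since any $g$ witnessing the antecedent of \ref{URC2} also witnesses the antecedent of \ref{RC2} (each $\beta\leq_{1}\gamma$ has the neighbourhood $\overline{\beta}g(\beta)$). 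Then one brings quantifiers to the front to obtain the internal analogue of \eqref{frugal}, weakens the consequent so that the innermost object ranges over finite binary-type sequences of a fixed length (as in the passage to \eqref{fromat}), and applies $\QFAC^{2,1}$ to extract a functional $\Xi$ witnessing the pair $(k,\beta)$. Discarding the $\beta$-component of $\Xi$ and taking the maximum of the $k$-component over the finite sequence it returns yields the required $\Psi^{3}$, establishing \ref{RC2}$\di$\ref{URC2}; and \ref{URC2}$\di$\ref{RC2} is trivial since $\QFAC^{1,0}\subseteq\RCAo$ supplies the modulus $g$ from the antecedent of \ref{RC2}. The pair \ref{RB}$\asa$\ref{URB} is treated in exactly the same fashion, the only change being the shape of the internal matrix (a value bound $\alpha(\overline{\beta}N)\leq k$ instead of a length bound), which does not affect the quantifier manipulations or the applicability of $\QFAC^{2,1}$.

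Finally, I would assemble the four equivalences $\ref{RC2}\asa\ref{URC2}\asa\ref{RB}\asa\ref{URB}\asa\WKL$ by chaining the implications just described with the base equivalences $\ref{RC2}\asa\WKL\asa\ref{RB}$. I expect the main obstacle to be purely bookkeeping: making sure the bound parameter $\gamma$ is threaded correctly through the truncation $\tilde g$ and through the compactness argument so that all the relevant trees remain finitely branching and the finite maxima used to define $\Psi$ and the value bounds are genuinely available in $\RCAo$ (equivalently, only require $\EXP$). No genuinely new idea beyond Theorem \ref{dorkkk} and \cite{simpson2}*{IV.1.4} is needed; the corollary is, as the text says, established ``in exactly the same way as the theorem.''
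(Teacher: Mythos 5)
Your proposal is correct and takes essentially the same route as the paper, which offers no separate argument for the corollary beyond stating that it is established ``in exactly the same way as'' Theorem \ref{dorkkk}, using the equivalence of weak and bounded K\"onig's lemma from \cite{simpson2}*{IV.1.4} --- precisely the reduction you spell out, including the transfer of the $\QFAC^{2,1}$ argument with the extra parameter $\gamma$. The only point worth flagging is that your direct \ref{RC2}$\di$\ref{RB} step (bounding the values by $\max_{|\sigma|\leq k}\alpha(\sigma)$) tacitly uses the paper's standing convention that $\alpha$ is a neighbourhood function, without which \ref{RB} as literally stated would fail; under that convention the step is fine.
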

\noindent
In the same vein, we also have the following corollary, where \ref{FMU} is as follows:
\be\label{FMU}\tag{\textsf{\textup{FMU}}}
(\exists \Psi^{3})(\forall \Phi^{2}\in C,\gamma^{1})(\forall \alpha, \beta\leq_{1}\gamma)(\overline{\alpha}\Psi(\Phi, \gamma)=\overline{\beta}\Psi(\Phi, \gamma) \di \Phi(\alpha)=\Phi(\beta)),   
\ee
and \ref{MC}$_{0}$ is \ref{MC} with all `\st' dropped and with a similar extra quantifier $(\forall\gamma^{1})$ guaranteeing a compact domain.   
\begin{cor}\label{slet}
In $\RCAo+\QFAC^{2.1}$, we have $\ref{FMU}\asa \WKL+\ref{MC}_{0}$.  
\end{cor}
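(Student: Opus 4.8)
The plan is to prove that $\ref{FMU}$ implies both $\WKL$ and $\ref{MC}_{0}$, and conversely that $\WKL+\ref{MC}_{0}$ implies $\ref{FMU}$. The implication $\ref{FMU}\di\ref{MC}_{0}$ is immediate: a modulus-of-uniform-continuity functional is in particular a modulus-of-pointwise-continuity functional, so if $\Psi^{3}$ witnesses $\ref{FMU}$ then $\Psi'(\Phi,\alpha,\gamma):=\Psi(\Phi,\gamma)$ witnesses $\ref{MC}_{0}$. For $\ref{FMU}\di\WKL$ I would argue by contradiction. Let $T\subseteq 2^{<\N}$ be infinite (with $\langle\rangle\in T$) and suppose $T$ has no path; then $\RCAo$ proves that the ``exit-length'' functional $\Phi_{T}(\alpha):=\mu n\,(\overline{\alpha}n\in T\wedge \overline{\alpha}(n+1)\notin T)$ is total and continuous on Baire space, so $\Phi_{T}\in C$. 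Apply $\ref{FMU}$ to $\Phi_{T}$ and $\gamma:=\langle 1,1,1,\dots\rangle$, so that $\{\alpha:\alpha\leq_{1}\gamma\}$ is Cantor space, and put $K:=\Psi(\Phi_{T},\gamma)$; then $\Phi_{T}$ is constant on every cylinder $[\sigma]$ with $\sigma\in 2^{<\N}$ of length $K$. Since $T$ is infinite it has nodes of every length, so fix $\sigma\in T$ of length $K$; the constant value $c_{\sigma}$ of $\Phi_{T}$ on $[\sigma]$ satisfies $c_{\sigma}\geq K$, and unwinding the statement that $\Phi_{T}$ is constantly $c_{\sigma}$ on $[\sigma]$ shows that every extension of $\sigma$ of length $\leq c_{\sigma}$ lies in $T$ while every extension of length $c_{\sigma}+1$ lies outside $T$; hence $T$ restricted to the extensions of $\sigma$ is a finite full binary tree. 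As this holds for each of the finitely many $\sigma\in T$ of length $K$, and $T$ has only finitely many nodes of length below $K$, $T$ would be finite --- contradicting its infinitude. So $T$ has a path, which is $\WKL$.

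For $\WKL+\ref{MC}_{0}\di\ref{FMU}$, fix $\Theta^{3}$ witnessing $\ref{MC}_{0}$. Since $\WKL$ holds, so do the principles of Corollary~\ref{ocrina}, in particular $\ref{URC2}$ for general compact domains; fix $\Psi_{0}^{3}$ witnessing it. Given $\Phi\in C$ and $\gamma^{1}$, the idea is to read off from $\Phi$, $\gamma$ and $\Theta$ an associate $a=a_{\Phi,\gamma}$ representing $\Phi$ on $\{\beta:\beta\leq_{1}\gamma\}$ together with a type~$2$ pointwise modulus $g=g_{\Phi,\gamma}$ for $a$ on that domain, and then to set $\Psi(\Phi,\gamma):=\Psi_{0}(g_{\Phi,\gamma},a_{\Phi,\gamma},\gamma)$. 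The obvious candidates are $g(\beta):=\Theta(\Phi,\beta,\gamma)$ and $a(\sigma):=\Phi(\sigma*0^{\infty})+1$ if $\Theta(\Phi,\sigma*0^{\infty},\gamma)\leq|\sigma|$ and $0$ otherwise; this choice at once yields the ``consistency'' clause of an associate, namely $a(\overline{\beta}k)>0\di a(\overline{\beta}k)=\Phi(\beta)+1$ for $\beta\leq_{1}\gamma$. Granting that $g$ is a modulus for $a$, so that the hypothesis of $\ref{URC2}$ is met, its conclusion supplies for each $\beta\leq_{1}\gamma$ some $N_{\beta}\leq\Psi(\Phi,\gamma)$ with $a(\overline{\beta}N_{\beta})>0$, whence $a(\overline{\beta}N_{\beta})=\Phi(\beta)+1$; therefore, if $\alpha,\beta\leq_{1}\gamma$ agree on their first $\Psi(\Phi,\gamma)$ entries, then $\overline{\alpha}N_{\alpha}=\overline{\beta}N_{\alpha}$ and so $\Phi(\alpha)+1=a(\overline{\alpha}N_{\alpha})=a(\overline{\beta}N_{\alpha})=\Phi(\beta)+1$, i.e.\ $\Phi(\alpha)=\Phi(\beta)$. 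Thus $\Psi$ witnesses $\ref{FMU}$; the restriction of the domain by $\gamma$ is handled exactly as in the passage from Theorem~\ref{dorkkk} to Corollary~\ref{ocrina}, via bounded K\"onig's lemma.

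The main obstacle is the granted step just above: the naive $g=\Theta(\Phi,\cdot,\gamma)$ need not be a modulus for the associate $a$, since $\Theta$ only returns \emph{some} pointwise modulus, which may be non-tight and not self-consistent; concretely $\Theta(\Phi,(\overline{\beta}g(\beta))*0^{\infty},\gamma)$ can exceed $g(\beta)$, in which case $a(\overline{\beta}g(\beta))=0$ and the hypothesis of $\ref{URC2}$ fails. The remedy is to ``saturate'' $\Theta$ into a monotone self-consistent pointwise modulus before building $a$ --- precisely the analogue of the $\tilde{g}$-trick used in passing from \eqref{short} to \eqref{fromat} in the proof of Theorem~\ref{dorkkk}, where a given modulus is replaced by the least exit point it bounds. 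It is exactly here that $\WKL$ is genuinely needed (the loose values of $\Theta$ must be tamed over the compact domain $\{\beta:\beta\leq_{1}\gamma\}$) and that $\QFAC^{2,1}$ is invoked to harvest the resulting uniform modulus as a functional, as in the second part of the proof of Theorem~\ref{dorkkk}. Once a monotone self-consistent modulus is in place, $a$ is total and resolving on $\{\beta:\beta\leq_{1}\gamma\}$, $g$ satisfies the hypothesis of $\ref{URC2}$, and the computation of the previous paragraph completes the argument.
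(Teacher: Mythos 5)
Your overall route is the same as the paper's. For the forward direction (which the paper dismisses as immediate) your split into $\ref{FMU}\di\ref{MC}_{0}$ (trivial) and $\ref{FMU}\di\WKL$ via the exit functional $\Phi_{T}$ is correct, modulo the implicit use of $\QFAC^{1,0}$ to form $\Phi_{T}$ as a type~$2$ object and a harmless off-by-one in describing the part of $T$ above a length-$K$ node. For the reverse direction you, like the paper, extract from $\ref{MC}_{0}$ an associate $a_{\Phi,\gamma}$ plus a pointwise modulus $g_{\Phi,\gamma}$, feed them to \ref{URC2} (available from $\WKL+\QFAC^{2,1}$ by Corollary \ref{ocrina}), and check that the resulting bound is a modulus of uniform continuity for $\Phi$ on $\{\beta:\beta\leq_{1}\gamma\}$; that last verification is carried out correctly and in more detail than in the paper.

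The gap sits exactly where you yourself locate ``the main obstacle''. You rightly note that with $g(\beta):=\Theta(\Phi,\beta,\gamma)$ and $a$ defined as in \eqref{frinkkkkkk}, the hypothesis $(\forall \beta\leq_{1}\gamma)\,a(\overline{\beta}g(\beta))>0$ of \ref{URC2} can fail, since $\Theta(\Phi,(\overline{\beta}k)*0^{\infty},\gamma)$ may exceed $k$ for every $k$ (for a constant $\Phi$ \emph{any} $\Theta$ is a modulus, including adversarial ones). But your remedy---``saturating'' $\Theta$ into a monotone self-consistent modulus, by analogy with the $\tilde{g}$-trick of Theorem \ref{dorkkk}---is asserted rather than constructed, and the analogy does not transfer: in the passage from \eqref{short} to \eqref{fromat} the trick is a bounded, decidable search along a given tree, whereas here the whole problem is that no bound on the values of $\Theta$ at the padded points is available. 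Moreover, attributing this repair to $\WKL$ is at odds with the intended structure of the proof: in the paper $\WKL$ enters only through \ref{URC2}, while the passage from a pointwise modulus to an associate is supposed to come from $\ref{MC}_{0}$ alone---the paper discharges it by invoking the construction in the proof of Kohlenbach's Prop.\ 4.4 (exactly as in Theorem \ref{krof2}) to obtain a functional $\Xi^{2\di 1}$ with $\Xi(\Phi,\gamma)$ an associate of $\Phi$ on $\{\alpha:\alpha\leq_{1}\gamma\}$, and then concludes $\WKL\di\ref{URC2}\di\ref{FMU}$. So either give an actual construction producing a compatible pair $(a,g)$ (e.g.\ by first arranging a self-modulating modulus and only then applying \eqref{frinkkkkkk}), or cite Kohlenbach's proposition for this step as the paper does; as written, the hypothesis of \ref{URC2} is never established, so your reverse direction is incomplete.
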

\begin{proof}
The forward direction is immediate.  For the reverse direction, again by the proof of \cite{kohlenbach4}*{Prop.\ 4.4}, \ref{MC}$_{0}$ provides a functional $\Xi^{2\di 1}$ such that $\Xi(\Phi, \gamma)$ is an associate for $\Phi^{2}\in C$ on $\{\alpha^{1}:\alpha\leq_{1}\gamma\}$.    
Hence, we obtain $\WKL\di \ref{URC2}\di \ref{FMU}$, assuming \ref{MC}$_{0}$.  
\end{proof}
As noted above, the use of associates in \ref{RC} is essential for obtaining the equivalences in Theorem \ref{dorkkk} and Corollary \ref{ocrina}:  The proof of the former fails if we try to apply it to the following `higher-order' version of \ref{RC}:
\be\label{trut}
(\forall \Phi^{2}\in C(2^{N}))(\exists N^{0})(\forall \alpha^{1},\beta^{1}\leq_{1}1)(\overline{\alpha}N=_{0}\overline{\beta}N \di \Phi(\alpha)=_{0}\Phi(\beta)).  
\ee
Indeed, the antecedent of \eqref{trut} involves \eqref{contje} restricted to Cantor space, which results in a too high quantifier-complexity to apply \textsf{QF-AC}.  Furthermore, we cannot
weaken the consequent of \eqref{trut} as in the proof of the theorem without access to an associate of $\Phi$ (uniformly via a functional).  

\medskip

In conclusion, we emphasise that on one hand, the choice of `continuity via an associate' in \ref{RC}, \ref{RC2}, and \ref{RB}, yields that the latter are automatically 
equivalent to their respective uniform versions \ref{URC}, \ref{URC2}, and \ref{URB}.  
On the other hand, for the `non-associate' version \eqref{trut}, an equivalence with \ref{FMU} is out of the question by Corollary~\ref{slet}, assuming\footnote{Note that by \cite{kohlenbach4}*{Cor.\ 4.11}, $\WKL$ guarantees that each $\Phi^{2}\in C(2^{N})$ \emph{has} an associate on Cantor space, but the corresponding proof is highly non-uniform, i.e.\ a functional providing this associate seems unlikely (without the use of $(\exists^{2})$).  
Furthermore, the proof of \cite{beeson1}*{Lemma, p.\ 65} seems to relativize to oracles, suggesting that $\WKL\not\di \ref{MC}_{0}$.} $\WKL\not\di \ref{MC}_{0}$ over $\RCAo$.  
In other words, the choice of the RM-definition of continuity guarantees that:  
\begin{center}
\emph{Every continuous function on Cantor space is uniformly continuous}, 
\end{center}
is equivalent to the higher-order statement:
\begin{center}
\emph{A functional witnesses the uniform continuity of every continuous function on Cantor space}, 
\end{center}
and such an equivalence does not follow for \eqref{trut}, modulo the non-derivability of \ref{MC}$_{0}$ from $\WKL$.  

\medskip

Finally, with regard to further results, we note that the Heine-Borel lemma \textsf{HB} has the same syntactic structure as \ref{RC}, giving rise to the following theorem.  
Here, \textsf{UHB} is the obvious uniform version of \textsf{HB} as in Remark~\ref{feralll}.   
\begin{cor}
In $\RCAO$, we have $\textup{\textsf{UHB}}^{\st}\asa \ref{HB}^{\st}\asa \WKL^{\st}$.\\
In $\RCAo+\QFAC^{2,1}$, we have $\textup{\textsf{UHB}}\asa \ref{HB}\asa \WKL$.
\end{cor}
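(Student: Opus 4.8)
The plan is to run the proof of Theorem~\ref{dorkkk} essentially verbatim, using the observation from Remark~\ref{feralll} that in the RM-coding of open sets the relation $x\in(c_{n},d_{n})$ is $\Sigma^{0}_{1}$, so that \ref{HB} has the same syntactic shape as \ref{RC} and \textsf{UHB} the same shape as \ref{URC}. First I would dispose of the two routine ingredients. The equivalence $\ref{HB}\asa\WKL$ is the standard equivalence between Heine--Borel compactness of the unit interval and weak K\"onig's lemma from \cite{simpson2}*{IV.1}, and its proof relativizes word for word to `\st', giving $\ref{HB}^{\st}\asa\WKL^{\st}$. The implication $\textup{\textsf{UHB}}\di\ref{HB}$ is immediate: from an open cover with $(\forall x\in[0,1])(\exists n^{0})(x\in(c_{n},d_{n}))$ one obtains, via $\QFAC^{1,0}$ (part of $\RCAo$), a function $g^{2}$ with $(\forall x\in[0,1])(x\in(c_{g(x)},d_{g(x)}))$, and then feeds $g$ to \textsf{UHB}; relative to `\st' one uses instead that $\HAC_{\INT}$ implies $\QFAC^{1,0}$ relative to `\st', exactly as in the proof of Theorem~\ref{dorkkk}. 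Hence everything reduces to $\ref{HB}\di\textup{\textsf{UHB}}$ and its `\st'-analogue.

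For $\ref{HB}^{\st}\di\textup{\textsf{UHB}}^{\st}$ I would assume $\ref{HB}^{\st}$ and take standard $(c_{n}),(d_{n}),g^{2}$ with $(\forall^{\st}x\in[0,1])(x\in(c_{g(x)},d_{g(x)}))$. Since $g$ maps standard reals to standard naturals, the antecedent of $\ref{HB}^{\st}$ is met, so $\ref{HB}^{\st}$ yields a \emph{standard} $k$ with $(\forall^{\st}x\in[0,1])(\exists n\leq k)(x\in(c_{n},d_{n}))$, and, $k$ being standard, this last clause relaxes from standard $x$ to all $x\in[0,1]$. Bringing the external quantifiers to the front turns this into a $\forall^{\st}\exists^{\st}$-statement with internal matrix, so $\HAC_{\INT}$ produces a standard functional $\Xi$; setting $\Psi(g,c,d)$ equal to the maximum of the relevant component of $\Xi(g,c,d)$ gives $\textup{\textsf{UHB}}^{\st}$. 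This is the exact analogue of the passage from \eqref{frugal} through \eqref{lds} to $\ref{URC}^{\st}$.

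For the internal $\ref{HB}\di\textup{\textsf{UHB}}$ I would imitate the internal half of the proof of Theorem~\ref{dorkkk} with its two extra tricks. Starting from \ref{HB}, one replaces $g$ by a truncated modulus $\tilde{g}$ (with $\tilde{g}(x)$ the least $n\leq g(x)$ such that $x\in(c_{n},d_{n})$, and $0$ otherwise), which is the analogue of the $\tilde{g}$ appearing in \eqref{short}; one then weakens the consequent so that its innermost universal quantifier no longer ranges over all reals $x\in[0,1]$ but over a finite discretization of $[0,1]$ (dyadic rationals of a fixed denominator, say, or length-$k$ binary approximations), so that the bracketed matrix becomes quantifier-free --- the analogue of the step from \eqref{short} to \eqref{fromat}. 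Now $\QFAC^{2,1}$ applies, and discarding the auxiliary witness component of the extracted functional $\Xi$ yields the required $\Psi^{3}$, i.e.\ \textsf{UHB}.

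The main obstacle is exactly the one flagged just before Theorem~\ref{dorkkk}: keeping the quantifier complexity low enough for $\QFAC^{2,1}$ in the internal case and for $\HAC_{\INT}$ in the `\st'-case. Compared with \ref{RC}, the extra wrinkle is that the outer quantifier in the consequent of \ref{HB} ranges over reals rather than over $2^{\N}$, so before the consequent can be put in quantifier-free form one must pass from compactness of $[0,1]$ to compactness of a discretized approximant --- legitimate via bounded K\"onig's lemma (\cite{simpson2}*{IV.1.4}, already used for Corollary~\ref{ocrina}). Once that replacement is in place, checking that the RM-coding keeps $x\in(c_{n},d_{n})$ at $\Sigma^{0}_{1}$, and hence the whole matrix within the scope of the choice axioms, is routine, and the remaining bookkeeping is identical to the proof of Theorem~\ref{dorkkk}.
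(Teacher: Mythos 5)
Your overall architecture is the paper's own (its proof is literally ``similar to the proof of Theorem~\ref{dorkkk}''), and the routine parts are fine: $\ref{HB}\asa\WKL$ relativizes to `st', $\textup{\textsf{UHB}}\di\ref{HB}$ follows from choice, and in the `st'-half $\HAC_{\INT}$ applies to \emph{any} internal matrix, so the $\Sigma^{0}_{1}$ nature of $x\in(c_{n},d_{n})$ costs nothing there. One caveat in that half: your claim that a standard $k$ covering all standard $x$ automatically covers all $x\in[0,1]$ is not the trivial analogue of the Cantor-space step (in $\RCAO$ there is no Standard Part, so nonstandard reals need not be near standard ones); this is exactly the ``with some effort'' of Remark~\ref{feralll}, and needs e.g.\ the observation that the complement in $[0,1]$ of finitely many intervals with standard endpoints contains a standard point if nonempty, or the tree argument of Theorem~\ref{soareyouuuuuu}. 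Note, however, that for $\textup{\textsf{UHB}}^{\st}$ you do not need this relaxation at all, since the consequent of $\textup{\textsf{UHB}}^{\st}$ is itself relativized to standard $x$.

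The genuine gap is in the internal half, at your analogue of the passage from \eqref{short} to \eqref{fromat}. Weakening the consequent to a \emph{point} discretization (dyadic rationals of a fixed denominator, or finite binary approximations of $x$) breaks the final step: in Theorem~\ref{dorkkk} one recovers \ref{URC} from the weakened consequent because $\alpha(\overline{\beta}N)>0$ depends only on $\overline{\beta}k$, whereas covering finitely many grid points of $[0,1]$ does not imply covering $[0,1]$, so the functional extracted by $\QFAC^{2,1}$ does not witness \textsf{UHB}. What is needed is the \emph{interval} form of the discretization, i.e.\ the tree condition of \cite{simpson2}*{IV.1.1} (every dyadic interval $[a_{s},b_{s}]$ with $|s|=k$ lies inside some $(c_{n},d_{n})$ with $n\leq k$), as exploited in the proof of Theorem~\ref{soareyouuuuuu}; and showing that \ref{HB}-with-modulus yields \emph{that} statement is precisely the non-trivial content of the $\WKL\asa\ref{HB}$ argument, not a trivial weakening. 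Two further frictions of the same kind: with real endpoints the matrix $c_{n}<a_{s}<b_{s}<d_{n}$ is $\Sigma^{0}_{1}$, not quantifier-free, so before $\QFAC^{2,1}$ the existential witnesses must be absorbed into the choice variables (or one takes rational endpoints as in the $\textup{\textsf{UHB}}(\Psi)$ of Section~\ref{dirfu}), and your truncated modulus $\tilde g$ (``least $n\leq g(x)$ with $x\in(c_{n},d_{n})$'') is not definable outright since that condition is undecidable. With these repairs---in effect routing $\ref{HB}\di\textup{\textsf{UHB}}$ through the internal version of the Heine--Borel/fan argument before applying the \ref{URC}-type extraction---the proof does go through as the paper intends, but the weakening as you state it would fail.
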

\begin{proof}
Similar to the proof of Theorem \ref{dorkkk}.  
\end{proof}
The author shows in \cite{firstHORM} that the uniform version of $\ATR_{0}$ is equivalent to $\ATR_{0}$ itself.  
The results in this section confirm the similarity between $\WKL_{0}$ (in the form of the fan theorem) and $\ATR_{0}$ as pointed out by Simpson in \cite{simpson2}*{I.11.7}.  

\medskip

In conclusion, we have established that RM-theorems like \ref{RC} are equivalent to their higher-order counterpart \ref{URC}, due to the reduced quantifier complexity of the RM-definition of continuity (compared to the usual definition).   
Thus, higher-order statements are not merely implicit in second-order RM, we can establish equivalence between RM-theorems and their higher-order versions.  
In the next section, we push our claims one step further by deriving \emph{explicit\footnote{An implication $(\exists \Phi)A(\Phi)\di (\exists \Psi)B(\Psi)$ is \emph{explicit} if there is a term $t$ in the language such that additionally $(\forall \Phi)[A(\Phi)\di B(t(\Phi))]$, i.e.\ $\Psi$ can be explicitly defined in terms of $\Phi$.\label{dirkske}}} equivalences between higher-order principles from equivalences in second-order RM.

\section{Explicit equivalences implicit in second-order RM}\label{dirfu}
In this section, we push our claim (that higher-order statements are implicit in second-order RM) one step further by deriving \emph{explicit}$^{\ref{dirkske}}$ equivalences between higher-order principles from equivalences in second-order RM.
Furthermore, we show in Remark \ref{foruki} that such results are unique to \emph{second-order} arithmetic.  
\bdefi[Explicit implication]
An implication $(\exists \Phi)A(\Phi)\di (\exists \Psi)B(\Psi)$ is \emph{explicit} if there is a term $t$ in the language such that additionally $(\forall \Phi)[A(\Phi)\di B(t(\Phi))]$, i.e.\ $\Psi$ can be explicitly defined in terms of $\Phi$.  
\edefi
Following \cite{simpson2}*{IV.1.2}, weak K\"onig's lemma is equivalent to the Heine-Borel lemma.  Recall that the fan theorem, denoted $\FAN$, is the classical contraposition of the former.  
Now consider the following explicit versions:
\begin{align}\label{UFAN}\tag{$\UFAN(\Phi)$}
(\forall T^{1}\leq_{1}1,g^{2})\big[(\forall \beta\leq_{1}1)&\overline{\beta}g(\beta)\not \in T \\
&\di (\forall \beta \leq_{1}1)(\exists i\leq \Phi(g))\overline{\beta}i\not\in T   \big].\notag
\end{align}
\begin{align}
(\forall c^{1}_{(\cdot)}, d^{1}_{(\cdot)}, h^{2})\big[
 (\forall x\in [0,1])&(x\in (c_{h(x)}, d_{h(x)})) \tag{$\textsf{\textup{UHB}}(\Psi)$}\\
& \di (\forall  x\in [0,1])(\exists n \leq \Psi(h, c_{(\cdot)}, d_{(\cdot)} ))(x\in (c_{n}, d_{n}))\big], \notag
\end{align}
where we assume that $c_{(\cdot)}, d_{(\cdot)}$ are sequences of rational numbers for simplicity.  
\begin{thm}\label{soareyouuuuuu}
From the proof of $\WKL\asa \ref{HB}$ in $\RCA_{0}$ \(See \cite{simpson2}*{IV.1}\), terms $s, t$ can be extracted witnessing the \emph{explicit} equivalence $\FAN\asa \ref{HB}$ in $\RCAo$, i.e.\  
\be\label{UKI}
(\forall \Phi^{3})[\UFAN(\Phi)\di \textsf{\textup{UHB}}(s(\Phi))] \wedge (\forall \Psi^{3})[\textup{\textsf{UHB}}(\Psi)\di \textsf{\textup{UFAN}}(t(\Psi))].  
\ee
\end{thm}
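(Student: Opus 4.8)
The strategy is to take the ordinary second-order proof of $\WKL_0 \leftrightarrow \ref{HB}$ from \cite{simpson2}*{IV.1} and simply \emph{trace what the proof does to witnesses}, reading off the terms $s$ and $t$ from the two directions of that argument. The point is that the second-order proof is already ``uniform'' in a strong sense: in proving $\WKL \to \ref{HB}$ one builds, from an open cover $(c_n,d_n)$ of $[0,1]$ together with a choice function $h$ assigning to each $x$ an index $h(x)$ with $x\in (c_{h(x)},d_{h(x)})$, a binary tree $T$ (the tree of finite binary sequences $\sigma$ such that the dyadic interval coded by $\sigma$ is not yet covered by $c_0,\dots,c_{|\sigma|}$), and shows this tree is finite; conversely, from a binary tree $T$ and a function $g$ with $\overline{\beta}g(\beta)\notin T$ for all $\beta\le_1 1$ one reads off an open cover of $[0,1]$. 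In both directions the construction of the relevant object is given by a \emph{term} in the function parameters, not by an appeal to a set-existence axiom; hence feeding in a \emph{functional} $\Phi$ (resp.\ $\Psi$) that uniformly witnesses the bound for all trees (resp.\ all covers) produces, by composing with these terms, a functional that uniformly witnesses the bound on the other side.

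\textbf{Key steps, in order.} First, recall the concrete form of the two reductions in \cite{simpson2}*{IV.1}. For $\ref{HB}\Rightarrow\WKL$/$\FAN$: given $T\le_1 1$ and $g^2$ with $(\forall\beta\le_1 1)(\overline{\beta}g(\beta)\notin T)$, associate to each binary $\sigma$ of length $n$ not in $T$ the dyadic open interval $I_\sigma \subseteq [0,1]$ of radius $2^{-n}$ around the dyadic rational $0.\sigma$, and let $(c_n,d_n)$ enumerate these intervals (for $\sigma\notin T$) together with a little padding to keep the indexing primitive recursive in $T$; define the choice function $h$ for the cover out of $g$ by sending $x$ to the index of $I_{\overline{\beta_x}g(\beta_x)}$ where $\beta_x$ is the binary expansion of $x$. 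All of this is a term in $T,g$. Second, apply the given $\Psi$ with $\UHB(\Psi)$ to this cover and $h$, obtaining a number $k = \Psi(h,c_{(\cdot)},d_{(\cdot)})$ such that $[0,1]$ is covered by the first $k$ intervals; unwinding the dyadic coding, $k$ bounds the depth at which every binary $\beta\le_1 1$ must leave $T$, which is exactly $\UFAN(t(\Psi))$ for the term $t(\Psi)$ built by composing $\Psi$ with the above term and the arithmetic that converts an interval-index bound into a tree-depth bound. Third, run the other direction symmetrically: from a cover $(c_n,d_n)$ of rationals and a choice function $h$, form the binary tree $T$ of $\sigma$ whose associated dyadic interval is not contained in $\bigcup_{i<|\sigma|}(c_i,d_i)$, define $g$ out of $h$ by $g(\beta) = $ least $n$ with $\overline{\beta}n\notin T$ (which exists because the cover is a cover), feed $T,g$ to $\Phi$ with $\UFAN(\Phi)$, and convert the resulting tree-depth bound back into a bound on the number of intervals needed, yielding $\UHB(s(\Phi))$. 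Fourth, note that each conversion step and each of $g,h,T,(c_n,d_n)$ is primitive recursive (with $\Phi$ resp.\ $\Psi$ as an oracle) in the input data, so $s$ and $t$ are genuine terms of $\RCAo$, and the two displayed implications in \eqref{UKI} hold by construction; since $\RCAo$ proves $\QFAC^{1,0}$ one has no difficulty handling the binary-expansion bookkeeping.

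\textbf{Main obstacle.} The conceptual content is minimal — it is the observation that Simpson's proof is witness-preserving — but the real work is \emph{bookkeeping}: one must check carefully that the map $T\mapsto (c_n,d_n)$ and its inverse are term-definable with the indexing lined up so that ``first $k$ intervals cover $[0,1]$'' and ``every branch leaves $T$ by depth $k'$'' translate into each other with an \emph{explicitly computable} change of bound (some fixed arithmetic function of $k$ and $|\sigma|$), and that the choice functions $g,h$ really can be extracted as terms from each other rather than merely shown to exist. The subtle point is that $\UHB$ is stated for rational-valued sequences $c_{(\cdot)},d_{(\cdot)}$ while a general cover arising from a tree is naturally dyadic — but dyadic rationals are a special case, so this is harmless — and, in the reverse direction, that the cover handed to $\UFAN$ must be turned into a cover whose indices are bounded in a way visible to the functional; this is exactly why one works with the dyadic interval coding, where depth in the tree and interval index are linked by a fixed formula. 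Once these correspondences are pinned down, \eqref{UKI} falls out. I would also remark, as the paper does elsewhere, that the essential reason this works for $\WKL_0$-level statements and not for the ``non-associate'' higher-order forms is the low quantifier complexity of the coded objects ($\Sigma^0_1$ membership in open sets, decidable membership in binary trees), which is what makes the witnessing data term-definable in the first place.
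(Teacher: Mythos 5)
Your plan — trace witnesses directly through Simpson's proof and compose them with the given functional — diverges from the paper's proof, which never hand-composes the constructions: it re-runs Simpson's argument relative to `st' in $\RCAO$ to get the equivalence of the relativized statements \eqref{FFAN} and \eqref{FHB}, then applies idealisation \textsf{I} and the term-extraction result (Corollary \ref{consresultcor}) to manufacture $s$ and $t$. That difference is not cosmetic, and your version has a genuine gap in the second conjunct. In $\UFAN(t(\Psi))$ the bound is $t(\Psi)(g)$, depending on $g$ \emph{only}, whereas your construction feeds $\Psi$ a cover built from the tree $T$ (intervals $I_\sigma$ for $\sigma\notin T$), so the number you obtain is $\Psi(h,c_{(\cdot)},d_{(\cdot)})$ with $c_{(\cdot)},d_{(\cdot)}$ depending on $T$; what you get is a $(T,g)$-dependent modulus, not $\UFAN(t(\Psi))$ ``exactly''. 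Eliminating the $T$-dependence is not bookkeeping: the optimal $T$-independent bound is essentially a fan-functional value $\sup_{\beta\leq_{1}1}g(\beta)$, and there is no term-definable cover built from $g$ alone to which $\Psi$ could be applied (restricting to intervals $I_\sigma$ with, say, $g(\sigma*00\dots)\leq|\sigma|$ need not yield a cover). In the paper this independence is produced precisely by the idealisation step, which pulls the standard $\exists^{\st}k$ in front of the internal quantifier over $T$ (resp.\ over the cover) \emph{before} extraction; that is the ``essential use of Nonstandard Analysis'' the paper points to, and it is the step your internal witness-tracing has no counterpart for.

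A secondary problem is your claim that all auxiliary data are ``primitive recursive in the inputs, hence terms''. The function $g$ built from $h,c_{(\cdot)},d_{(\cdot)}$ (least $n$ such that the level-$n$ dyadic interval around $x$ lies inside $(c_{h(x)},d_{h(x)})$) requires an unbounded search, since $x\in(c_{h(x)},d_{h(x)})$ comes with no modulus; and the map from a real $x$ to its binary expansion $\beta_x$, which you use to define $h$ from $g$, is not given by a term (it is discontinuous at dyadic rationals). So $s(\Phi)$ and $t(\Psi)$ as you describe them are not terms of $\RCAo$ but only functionals obtainable with choice. In the paper's route these issues never arise on the outside: the nonstandard proof may use classical logic and $\QFAC^{1,0}$ freely, and the terms are supplied by the extraction algorithm of Theorem \ref{consresult}. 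If you want a purely internal proof you must either add these moduli to the data or rework the constructions (approximate expansions, bounded searches) — and you still need a separate idea for the $g$-only dependence in $\UFAN$.
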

\begin{proof}
The proof of $\WKL\asa \ref{HB}$ in $\RCA_{0}$ from \cite{simpson2}*{IV.1} trivially goes through relative to `st' in $\RCAO$, i.e.\ the latter proves $\WKL^{\st}\asa \ref{HB}^{\st}$.  
It is now a tedious but straightforward verification that the latter proof also establishes that
\begin{align}
(\forall^{\st}g^{2})(\forall T^{1}\leq_{1}1)\big[(\forall \beta\leq_{1}1)&\overline{\beta}g(\beta)\not \in T \label{FFAN}\\
&\di (\exists^{\st}k )(\forall \beta \leq_{1}1)(\exists i\leq k)\overline{\beta}i\not\in T   \big]\notag
\end{align}
is equivalent over $\RCAO$ to
\begin{align}
(\forall^{\st}h^{2})(\forall c^{1}_{(\cdot)}, d^{1}_{(\cdot)})\big[ (\forall x\in [0,1])&(x\in (c_{h(x)}, d_{h(x)}))\label{FHB} \\
& \di (\exists^{\st}k)(\forall  x\in [0,1])(\exists n \leq k)(x\in (c_{n}, d_{n}))\big].\notag
\end{align}
For completeness, we establish that \eqref{FFAN}$\di$\eqref{FHB} based on the proof of \cite{simpson2}*{IV.1.1}.  
As in the latter, for a binary sequence $s^{0}$ define the rational numbers
\[\textstyle
a_{s}:=\sum_{i<|s|}\frac{s(i)}{2^{i+1}}    \textup{ and }  b_{s}:=a_{s}+\frac{1}{2^{|s|}}, 
\]
and define the tree $T$ by $s\in T\asa (\forall i\leq|s|)\neg(c_{i}<a_{s}<b_{s}<d_{i})$.  Now suppose the standard functional $h$ as in \eqref{FHB} is such that $(\forall x\in [0,1])(x\in (c_{h(x)}, d_{h(x)}))$.     
For $f\leq_{1}1$, define the real $x(f):=\sum_{j=0}^{\infty}\frac{f(j)}{2^{j+1}}$ and note that $a_{\overline{f}n}\leq x(f)\leq b_{\overline{f}n}$ for all $n$.  Next, define the functional $g^{2}$ as follows: $g(f)$ is the least $n\geq h(x(f))$ such that $c_{h(x(f))}<a_{\overline{f}n}<b_{\overline{f}n}<d_{h(x(f))}$.  By definition, we have $(\forall f^{1}\leq_{1}1)\overline{f}g(f)\not\in T$ and \eqref{FFAN} implies $(\forall \beta \leq_{1}1)(\exists i\leq k_{0})\overline{\beta}i\not\in T$ for some standard $k_{0}$.  This number $k_{0}$ also satisfies $ (\forall  x\in [0,1])(\exists n \leq k)(x\in (c_{n}, d_{n}))$.  Thus, we have established \eqref{FFAN}$\di$\eqref{FHB}, and the reverse implication follows in the same way using the proof of \cite{simpson2}*{IV.1.2}.  

\medskip

Next, both \eqref{FFAN} and \eqref{FHB} can trivially be brought into the following form: $(\forall^{\st} l^{2})(\forall S^{1})(\exists^{\st}k^{0})\phi(l, S, k)$, where $\phi$ is internal.  Applying idealisation $\textsf{I}$ to the latter yields the \emph{equivalent} formula $(\forall^{\st} l^{2})(\exists^{\st}k^{0})(\forall S^{1})\phi(l, S, k)$.  
Hence, \eqref{FFAN}$\asa$\eqref{FHB} is equivalent to a formula of the form 
\be\label{tering}
(\forall^{\st}x^{2})(\exists^{\st}y^{0})\varphi(x, y)\asa (\forall^{\st}u^{2})(\exists^{\st} v^{0})\psi(u,v), 
\ee
where $\varphi, \psi$ are again internal.  
Now, $(\forall^{\st}x^{2})(\exists^{\st}y^{0})\varphi(x, y)\di (\forall^{\st}u^{2})(\exists^{\st} v^{0})\psi(u,v)$ trivially implies (since $z(x)$ is standard for standard $x,z$)
\[
(\forall^{\st} z^{3})\big[(\forall^{\st}x^{2})\varphi(x, z(x))\di (\forall^{\st}u^{2})(\exists^{\st} v^{0})\psi(u,v)\big]
\]
which, thanks to an ample serving of classical logic, yields that 
\[
(\forall^{\st} z^{3}, u^{2})(\exists^{\st}x^{2}, v^{0})\big[\varphi(x, z(x))\di \psi(u,v)\big],
\]
where the formula in square brackets is internal.
Applying Corollary \ref{consresultcor} yields a term $t$ such that $\RCAo$ proves 
\[
(\forall z^{3}, u^{2})(\exists x^{2}, v^{0}\in t(z, u))\big[\varphi(x, z(x)))\di \psi(u,v)\big],
\]
Now define $s(z,u)$ as $\max_{i<|t(z, u)(2)|}t(z, u)(2)(i)$, i.e.\ $s$ ignores the components pertaining to $x^{2}$ and takes the maximum of those pertaining to $v^{0}$.  
We have
\[
(\forall z^{3}, u^{2})(\exists x^{2})(\exists v^{0}\leq s(z, u))\big[\varphi(x, z(x)))\di \psi(u,v)\big],
\]
which, again thanks to classical logic, yields
\be\label{tyfus}
(\forall z^{3}\big[(\forall x^{2})\varphi(x, z(x)))\di (\forall v^{2})(\exists v^{0}\leq s(z, u))\psi(u,v)\big].
\ee
Assuming \eqref{tering} is the implication \eqref{FFAN}$\di$\eqref{FHB}, \eqref{tyfus} is exactly the first conjunct of \eqref{UKI}.  The second conjunct of \eqref{UKI} is obtained by repeating the previous steps for 
\[
(\forall^{\st}x^{2})(\exists^{\st}y^{0})\varphi(x, y)\leftarrow (\forall^{\st}u^{2})(\exists^{\st} v^{0})\psi(u,v), 
\]
and the proof is finished.  
\end{proof}
The previous theorem establishes that \emph{explicit}$^{\ref{dirkske}}$ equivalences between higher-order principles may be derived from equivalences in second-order RM.
Although we choose the simplest possible equivalence from the RM of $\WKL_{0}$, the proof of the theorem is still rather messy.  Nonetheless, results similar to \eqref{UKI} may be obtained for other equivalences from the RM of $\WKL_{0}$, using the proof of Theorem \ref{soareyouuuuuu} as a template.   Obvious examples are \ref{RC} and \ref{RB} from the previous theorem.  

\medskip

Note that the previous proof makes essential use of Nonstandard Analysis, in particular the \emph{term extraction algorithm} provided by Theorem \ref{consresult}.    
The explicit equivalence \eqref{UKI} thus hints at a hitherto unknown computation aspect of Nonstandard Analysis.  This will be explored further in \cites{samzoo, sambon, samfee}.  

\medskip

We finish this section with a remark on extensionality.  In particular, we show that the proof of the theorem only reliably goes through for theorems of second-order arithmetic.  
\begin{rem}[Extensionality]\label{foruki}\rm
The proof of $\WKL\asa \ref{HB}$ in $\RCA_{0}$ goes through relative to `st' in $\RCAO$ since all axioms required for the proof in $\RCA_{0}$ are also valid relative to `st' in $\RCAO$.   
However, this does not generalise to proofs in $\RCAo$:  The axiom of extensionality \eqref{EXT} is part of the latter, but $\RCAO$ does not include \eqref{EXT}$^{\st}$, as noted in Remark \ref{equ}.  
Hence, a proof in $\RCAo$ does necessarily goes through in $\RCAO$ relative to `st' if the former invokes \eqref{EXT}.  
However, this implies that results such as \eqref{UKI} can only be `automatically' obtained for second-order statements in general;  We need to track the use of extensionality for higher-order statements proved in $\RCAo$.    
\end{rem}

\begin{ack}\rm
This research was supported by the following funding bodies: FWO Flanders, the John Templeton Foundation, the Alexander von Humboldt Foundation, and the Japan Society for the Promotion of Science.  
The author expresses his gratitude towards these institutions. 
The author would like to thank Ulrich Kohlenbach and Solomon Feferman for repeatedly drawing his attention to the topic of this paper, and for their valuable advice in general.  
\end{ack}

\begin{bibdiv}
\begin{biblist}
\bib{beeson1}{book}{
  author={Beeson, Michael J.},
  title={Foundations of constructive mathematics},
  series={Ergebnisse der Mathematik und ihrer Grenzgebiete},
  volume={6},
  note={Metamathematical studies},
  publisher={Springer},
  date={1985},
  pages={xxiii+466},
}

\bib{brie}{article}{
  author={van den Berg, Benno},
  author={Briseid, Eyvind},
  author={Safarik, Pavol},
  title={A functional interpretation for nonstandard arithmetic},
  journal={Ann. Pure Appl. Logic},
  volume={163},
  date={2012},
  number={12},
  pages={1962--1994},
}

\bib{bennosam}{article}{
  author={van den Berg, Benno},
  author={Sanders, Sam},
  title={Transfer equals Comprehension},
  journal={Submitted},
  volume={},
  date={2014},
  number={},
  note={Available on arXiv: \url {http://arxiv.org/abs/1409.6881}},
  pages={},
}

\bib{fega}{article}{
  author={Ferreira, Fernando},
  author={Gaspar, Jaime},
  title={Nonstandardness and the bounded functional interpretation},
  journal={Ann. Pure Appl. Logic},
  volume={166},
  date={2015},
  number={6},
  pages={701--712},
}

\bib{fried}{article}{
  author={Friedman, Harvey},
  title={Some systems of second order arithmetic and their use},
  conference={ title={Proceedings of the International Congress of Mathematicians (Vancouver, B.\ C., 1974), Vol.\ 1}, },
  book={ },
  date={1975},
  pages={235--242},
}

\bib{fried2}{article}{
  author={Friedman, Harvey},
  title={ Systems of second order arithmetic with restricted induction, I \& II (Abstracts) },
  journal={Journal of Symbolic Logic},
  volume={41},
  date={1976},
  pages={557--559},
}

\bib{hunterphd}{book}{
  author={Hunter, James},
  title={Higher-order reverse topology},
  publisher={ProQuest LLC, Ann Arbor, MI},
  date={2008},
  pages={97},
}

\bib{kohlenbach2}{article}{
  author={Kohlenbach, Ulrich},
  title={Higher order reverse mathematics},
  conference={ title={Reverse mathematics 2001}, },
  book={ series={Lect. Notes Log.}, volume={21}, publisher={ASL}, },
  date={2005},
  pages={281--295},
}

\bib{kohlenbach4}{article}{
  author={Kohlenbach, Ulrich},
  title={Foundational and mathematical uses of higher types},
  conference={ title={Reflections on the foundations of mathematics (Stanford, CA, 1998)}, },
  book={ series={Lect. Notes Log.}, volume={15}, publisher={ASL}, },
  date={2002},
  pages={92--116},
}

\bib{wownelly}{article}{
  author={Nelson, Edward},
  title={Internal set theory: a new approach to nonstandard analysis},
  journal={Bull. Amer. Math. Soc.},
  volume={83},
  date={1977},
  number={6},
  pages={1165--1198},
}

\bib{noortje}{book}{
  author={Normann, Dag},
  title={Recursion on the countable functionals},
  series={LNM 811},
  volume={811},
  publisher={Springer},
  date={1980},
  pages={viii+191},
}

\bib{robinson1}{book}{
  author={Robinson, Abraham},
  title={Non-standard analysis},
  publisher={North-Holland},
  place={Amsterdam},
  date={1966},
  pages={xi+293},
}

\bib{yamayamaharehare}{article}{
  author={Sakamoto, Nobuyuki},
  author={Yamazaki, Takeshi},
  title={Uniform versions of some axioms of second order arithmetic},
  journal={MLQ Math. Log. Q.},
  volume={50},
  date={2004},
  number={6},
  pages={587--593},
}

\bib{firstHORM}{article}{
  author={Sanders, Sam},
  title={Uniform and nonstandard existence in Reverse Mathematics},
  year={2014},
  journal={Submitted, Available from arXiv: \url {http://arxiv.org/abs/1502.03618}},
}

\bib{samzoo}{article}{
  author={Sanders, Sam},
  title={Taming the Reverse Mathematics zoo},
  year={2014},
  journal={Submitted, Available from arXiv: \url {http://arxiv.org/abs/1412.2022}},
}

\bib{samfee}{article}{
  author={Sanders, Sam},
  title={Non-standard Nonstandard Analysis and the computational content of standard mathematics},
  year={2015},
  journal={In preparation},
}

\bib{sambon}{article}{
  author={Sanders, Sam},
  title={The unreasonable effectiveness of Nonstandard Analysis},
  year={2015},
  journal={In preparation},
}

\bib{simpson1}{collection}{
  title={Reverse mathematics 2001},
  series={Lecture Notes in Logic},
  volume={21},
  editor={Simpson, Stephen G.},
  publisher={ASL},
  place={La Jolla, CA},
  date={2005},
  pages={x+401},
}

\bib{simpson2}{book}{
  author={Simpson, Stephen G.},
  title={Subsystems of second order arithmetic},
  series={Perspectives in Logic},
  edition={2},
  publisher={CUP},
  date={2009},
  pages={xvi+444},
}

\bib{troelstra1}{book}{
  author={Troelstra, Anne Sjerp},
  title={Metamathematical investigation of intuitionistic arithmetic and analysis},
  note={Lecture Notes in Mathematics, Vol.\ 344},
  publisher={Springer Berlin},
  date={1973},
  pages={xv+485},
}

\end{biblist}
\end{bibdiv}
\bye